%% file: main.tex
\documentclass[a4paper]{amsart}

\usepackage[utf8]{inputenc}
\usepackage{tikz}
\usepackage{tikz-cd} 
\usetikzlibrary{babel}
\usepackage{amssymb} 
\usepackage{ stmaryrd } 
\usepackage{amsmath} 
\usepackage{amsthm} 
\usepackage{verbatim}
\usepackage{hyperref}
\usepackage{cleveref} 
\usepackage{float}

\usepackage[backend=biber, style=alphabetic, sorting=ynt]{biblatex}
\usepackage{csquotes}
\newtheorem{theo}{Theorem}

\newtheorem{cor}{Corollary}
\numberwithin{cor}{section}

\newtheorem{prop}{Proposition}
\numberwithin{prop}{section}

\newtheorem{lemma}{Lemma}

\theoremstyle{definition}
\newtheorem{defi}{Definition}

\theoremstyle{remark}
\newtheorem{rmk}{Remark}

\DeclareMathOperator{\Spec}{Spec}

\DeclareMathOperator{\Sing}{Sing}

\title{On the Ramanujan Vector Field modulo \texorpdfstring{$p$}{p}}

\author{Frederico Bianchini}

\address{Instituto Nacional de Matemática Pura e Aplicada\\
Estrada Dona Castorina, 110\\
Rio de Janeiro-RJ, Brazil}

\email{frederico.bianchini@impa.br}

\begin{document}

\begin{abstract}
	For every prime \(p \geq 5\), we compute the \(p\)-th power of the Ramanujan vector field that arises from the differential relations discovered by Ramanujan for the Eisenstein series \(E_2,E_4\) and \(E_6\). Our method results in explicit equations for the \(p\)-th power and uses classical results of Serre and Swinnerton-Dyer about modular forms modulo \(p\). From this, we verify that a general conjecture by Sheperd-Barron and Ekedahl is valid for the Ramanujan vector field. Furthermore, we consider the affine realization of a certain moduli space of elliptic curves where the Ramanujan vector field is defined, and describe -- in characteristic \(p\) -- the locus given by supersingular elliptic curves in two ways: a classical one -- using equations for the supersingular polynomial -- and a new one as the singular set of some vector fields. Additionally, we prove that the Ramanujan vector field is transversal to this locus. 
\end{abstract} 

\maketitle

\input{introduction.tex}

\input{the-p-th-power.tex}

\input{supersingular_locus}

\printbibliography
\end{document}

%% file: introduction.tex
\section{Introduction}

    The classical differential relations between the normalized Eisenstein series 
    \begin{align*}
        E_2 &= 1 - 24 \sum_{n=0}^{\infty} \sigma_1(n) q^n \\
        E_4 &= 1 + 240 \sum_{n=0}^{\infty} \sigma_3(n) q^n \\ 
        E_6 &= 1 - 504 \sum_{n=0}^{\infty} \sigma_5(n) q^n 
    \end{align*}
    found by Ramanujan (\cite{ramanujan-on-certain-arithmetical-functions}) are given by 
    \begin{equation}
        \label{Ramanujan_system} 
        \theta E_2 = \frac{E_2^2 - E_4}{12} \quad \theta E_4 = \frac{E_2E_4 - E_6}{3} \quad \theta E_6 = \frac{E_2 E_6 - E_4^2}{2}
    \end{equation}
    where \(\theta := q \frac{\partial}{\partial q}\). Associated to these equations, we consider the \emph{Ramanujan vector field} 
    \begin{equation}
        \label{Ramanujan_vf}
        R:= \frac{e_2^2 - e_4}{12} 
   	\frac{\partial}{\partial e_2} + 
   	\frac{e_2e_4 - e_6}{3}\frac{\partial}{\partial e_4} + 
	\frac{e_2 e_6 - e_4^2}{2} 
	\frac{\partial}{\partial e_6}             
    \end{equation}
    defined globally in the 3-dimensional affine space with coordinates \((e_2,e_4,e_6)\). The analytic trajectories of the vector field \eqref{Ramanujan_vf} in \(\mathbf{C}^3\) correspond to local solutions of the differential system defined by \eqref{Ramanujan_system} and, considering all the solutions together, we can define a holomorphic foliation \(\mathcal{F}(R)\) in \(\mathbf{C}^3\). This foliation has a singular set given by the twisted cubic \(\{(s, s^2, s^3): s \in \mathbb{C}\}\) which is contained in the \(R\)-invariant algebraic hypersurface defined by the zeroes of \(e_4^3 - e_6^2\). It is known that the algebraic leaves of \(\mathcal{F}(R)\) are exactly the ones contained in \(\{ e_4^3 - e_6^2= 0\}\) (see \cite{Nesterenko_1996}, \textsection 4 and \cite{movasati-on-elliptic-modular-foliations}, Theorem 1). In other words, the foliation \(\mathcal{F}(R)\) restricted to \(\mathbf{C}^3 \setminus \{e_4^3 - e_6^2 = 0\}\) \emph{ has only transcendental leaves}. These properties were first explored in Nesterenko's proof that, for each \(q \in \{z \in \mathbf{C}: 0 < |z| <1\}\), the transcendence degree of \(\mathbf{Q}(q, E_2(q), E_4(q), E_6(q))\) is greater or equal than three (see \cite{Nesterenko_1996}).
    
    Looking at the definition of \(R\), it is clear that it defines a vector field in the affine scheme \(\mathbf{A}^3_{\mathbf{Z}[1/6]}\), thus reducing the Ramanujan vector field modulo \(p\) makes sense for every prime \(p \geq 5\). A general conjecture put forward by N. Sheperd-Barron and T. Ekedahl (see \cite{bost}, page 165) predicts that the lack of algebraic leaves for \(R\) in the complex analytic space \(\mathbf{C} \setminus \{e_4^3 - e_6^2 = 0\}\) implies that, for an infinite number of primes \(p \geq 5\), the foliation induced by \(R\) in the affine scheme 
    \[
        U_{\mathbf{F}_p} := \Spec \mathbf{F}_p[e_2,e_4,e_6, (e_4^3 - e_6^2)^{-1}] \subset \mathbf{A}^3_{\mathbf{F}_p}
    \] 
    must have \(p\)-curvature different from zero. In simple terms, this means that the \(p\)-th power \(R^p\) is not a multiple of \(R\) by a regular function of \(U_{\mathbf{F}_p} \). \emph{We verify the prediction for this specific case by explicitly computing \(R^p\).} In fact we show a little bit more, namely that the \(p\)-curvature is non zero for \emph{every} prime \(p \geq 5\) (see \cref{cor: Sheperd-Barron-Ekedahl}). 

    In order to do that, we use the fact that the tangent sheaf \(\mathcal{T}_{U_{\mathbf{F}_p}}\) is generated as an \(\mathcal{O}_{U_{\mathbf{F}_p}}\)- module by the three vector fields 
    \begin{align}
        \label{eq:basis}
        \begin{split}
        R &:= \text{Ramanujan vector field} \\
        F &:= - 12 \frac{\partial}{\partial e_2} \\
        H &:= 2 e_2 \frac{\partial}{\partial e_2} + 4 e_4
			\frac{\partial}{\partial e_4} + 6 e_6 
			\frac{\partial}{\partial e_6}
        \end{split}
    \end{align}
    and that this basis forms an \(\mathfrak{sl}_2\)-triple. These last two properties are crucial for the techniques we used and they have been studied from different perspectives (for example, \cite{guillot} explores the same properties for the Halphen family of differential equations and \cite{bogo-younes} develops a classification of algebras with similar structure). Furthermrore, we use classical results on modular forms modulo \(p\) obtained by Serre and Swinnerton-Dyer (\cite{swd-modl}, \cite{serre-congruences-et-formes-modulaires}). The precise formulation of the first result is as follows.
	
	\begin{theo}
		\label{thm:main_theorem}
		Let \(k\) be a field of characteristic \(p \geq 5\). 
		Then, the \(p\)-th power of the Ramanujan vector field as a global section of the tangent sheaf \(\mathcal{T}_{\mathbf{A}^3_{k}}\) can be written as 
		\begin{equation}
            \label{eq:p-th_power}
            R^p =  \frac{\tilde B^2 - e_4 \tilde A^2}{12} \frac{\partial}{\partial e_2} + \frac{\tilde A(e_4 \tilde B - e_6 \tilde A)}{3} \frac{\partial}{\partial e_4} + \frac{\tilde A(e_6 \tilde B - e_4^2 \tilde A)}{2} \frac{\partial}{\partial e_6}
		\end{equation}
        where \( \tilde{A},  \tilde{B}\) are the reduction modulo \(p\) of the unique polynomials \(A,B \in \mathbf{Q}[e_4,e_6]\) such that 
        \begin{align*}
            A (E_4,E_6) &= E_{p-1} \\
            B  (E_4,E_6) &= E_{p+1}
        \end{align*}
        Here \(E_{\nu}\) denotes the \(\nu\)-th normalized Eisenstein series.
	\end{theo}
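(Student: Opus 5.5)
The plan is to identify $R^p$ through its action on the coordinates. The strategy has three parts: use the $\mathfrak{sl}_2$-structure of the basis \eqref{eq:basis} to show that the coefficients of $R^p$ are weighted-homogeneous polynomials in $e_4,e_6$ alone; compare $q$-expansions modulo $p$; and conclude with Swinnerton-Dyer's description of modular forms modulo $p$.

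\emph{Step 1: structure of $R^p$.} Over $\mathbf{Z}[1/6]$ one checks directly that
\[
[H,R]=2R,\qquad [H,F]=-2F,\qquad [R,F]=H .
\]
Since $\operatorname{char}k=p$, the map $R^p$ is again a derivation, and $\operatorname{ad}(R^p)=\operatorname{ad}(R)^p$.

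From the relations we get $\operatorname{ad}(R)F=H$, $\operatorname{ad}(R)^2F=-2R$ and $\operatorname{ad}(R)^3F=0$. Because $p\ge 5>3$, it follows that $[R^p,F]=0$. As $F=-12\,\partial/\partial e_2$ and $12$ is invertible, the coefficients of $R^p$ in the basis $\partial/\partial e_2,\partial/\partial e_4,\partial/\partial e_6$ do not depend on $e_2$.

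Next, give $e_{2i}$ weight $2i$. Then $R$ raises weight by $2$, since each of its coefficients is homogeneous of weight $2i+2$ in front of $\partial/\partial e_{2i}$. Hence the coefficient $P_{2i}$ of $\partial/\partial e_{2i}$ in $R^p$ is a homogeneous polynomial of weight $2i+2p$. Combining with the previous paragraph, $P_{2i}\in k[e_4,e_6]$ is homogeneous of weight $2p+2i$. The coefficients of the right-hand side $V$ of \eqref{eq:p-th_power} have the same form, because $\tilde A$ has weight $p-1$ and $\tilde B$ has weight $p+1$. Here $A,B$ have $p$-integral coefficients: $E_{p\pm1}$ have $p$-integral $q$-expansions by von Staudt--Clausen, and $E_4,E_6$ are algebraically independent with $\mathbf{Z}[1/6]$-integral structure, so the reductions $\tilde A,\tilde B$ make sense.

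\emph{Step 2: $q$-expansions.} Consider the ring map $\varphi:\mathbf{F}_p[e_2,e_4,e_6]\to\mathbf{F}_p[[q]]$ sending $e_{2i}\mapsto \tilde E_{2i}$. By \eqref{Ramanujan_system} we have $\varphi\circ R=\theta\circ\varphi$, and therefore
\[
\varphi(P_{2i})=\theta^p\tilde E_{2i}.
\]
Since $\theta^p(q^n)=n^pq^n=nq^n$ in characteristic $p$, this gives $\varphi(P_{2i})=\theta\tilde E_{2i}=\varphi(R(e_{2i}))$.

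On the other side, the classical congruences $E_{p-1}\equiv 1$ and $E_{p+1}\equiv E_2 \pmod p$ (Serre, Swinnerton-Dyer) give $\varphi(\tilde A)=1$ and $\varphi(\tilde B)=\tilde E_2$. Hence $\varphi(V(e_{2i}))=\varphi(R(e_{2i}))$ as well; for instance $\varphi\big((\tilde B^2-e_4\tilde A^2)/12\big)=(\tilde E_2^2-\tilde E_4)/12$. So $P_{2i}-V(e_{2i})$ is a homogeneous element of $k[e_4,e_6]$ of weight $2p+2i$ that lies in $\ker\varphi$ (extended to $k$ by scalars).

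\emph{Step 3: injectivity on fixed weight.} By Swinnerton-Dyer, the kernel of $\varphi|_{\mathbf{F}_p[e_4,e_6]}$ is the ideal generated by $\tilde A-1$. This ideal contains no nonzero weighted-homogeneous element: if $f=(\tilde A-1)g$, then the lowest-weight component of $g$ produces a lowest-weight component $-g_{\min}$ of $f$ and, at weight raised by $p-1$, a top component $\tilde A g_{\max}$, so $f$ is not homogeneous unless $g=0$. Tensoring with $k$ preserves this. Therefore $P_{2i}=V(e_{2i})$ for $i=1,2,3$, which is \eqref{eq:p-th_power}.

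The main obstacle is Step 1. Everything hinges on eliminating $e_2$ from the coefficients, because on the full ring $\mathbf{F}_p[e_2,e_4,e_6]$ the kernel of $\varphi$ also contains $\tilde B-e_2$, and homogeneity alone would not suffice. The nilpotency $\operatorname{ad}(R)^3F=0$ together with $\operatorname{ad}(R^p)=\operatorname{ad}(R)^p$ is what handles this, so the bracket relations must be verified carefully, including the signs and the factor $-12$ in $F$.
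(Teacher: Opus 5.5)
Your Step 1 fails in characteristic $p$, at exactly the point you single out as the crux. The identity $[R^p,F]=0$ only gives $\frac{\partial P_{2i}}{\partial e_2}=0$. In characteristic $p$ this means $P_{2i}\in k[e_2^p,e_4,e_6]$, not $P_{2i}\in k[e_4,e_6]$.

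Your weight count removes $e_2$ from $P_2$, since no monomial $e_2^pe_4^ae_6^b$ has weight $2p+2$. It does not do so for the other two coefficients: a priori
\[
P_4=Q_4+c_4\,e_2^pe_4,\qquad P_6=Q_6+c_6\,e_2^pe_6,\qquad Q_{2i}\in k[e_4,e_6],\ c_4,c_6\in k .
\]
Step 3 does not cover these terms. On $k[e_2,e_4,e_6]$ the kernel of $\varphi$ contains nonzero homogeneous elements, for instance $\tilde B-e_2\tilde A$. Ruling out $c_4e_2^pe_4+G\in\ker\varphi$ through $q$-expansions would need additional input, such as the coprimality of $\tilde A$ and $\tilde B$.

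The paper's proof makes the same inference for $r_1$ (``$F(r_1)=0$, so $r_1$ does not depend on $e_2$''), where a term $e_2^pe_4e_6/(e_4^3-e_6^2)$ is a priori allowed. So this is not where you diverge from it, but it is a real gap.

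It closes quickly using $[R,R^p]=0$. Write $R^p=V_0+e_2^pW$ with $W=c_4e_4\frac{\partial}{\partial e_4}+c_6e_6\frac{\partial}{\partial e_6}$. Since $R(e_2^p)=0$, you get $[R,V_0]+e_2^p[R,W]=0$. Here $[R,V_0]$ has $e_2$-degree at most $1$ and $[R,W]$ is free of $e_2$, so $[R,W]=0$. But
\[
[R,W]=\frac{c_4e_4}{12}\frac{\partial}{\partial e_2}+\frac{(c_6-c_4)e_6}{3}\frac{\partial}{\partial e_4}+\frac{(2c_4-c_6)e_4^2}{2}\frac{\partial}{\partial e_6},
\]
which forces $c_4=c_6=0$. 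Alternatively, for $i=2,3$ the $e_2^p$-coefficient of $R^p(e_{2i})$ comes only from the degree-raising part $\frac{e_2}{12}\bigl(e_2\frac{\partial}{\partial e_2}+4e_4\frac{\partial}{\partial e_4}+6e_6\frac{\partial}{\partial e_6}\bigr)$ of $R$. It equals $12^{-p}\prod_{a=0}^{p-1}(a+2i)\,e_{2i}$, which is $0$.

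With that repair your argument is complete, and it takes a genuinely different route from the paper's.

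The paper expands $R^p=r_1R+r_2F+r_3H$ in the $\mathfrak{sl}_2$-basis of $\mathcal{T}_{U_k}$, which requires inverting $e_4^3-e_6^2$. It then uses $[R,R^p]=0$ together with \cref{lemma:firstintegral}, and hence the Swinnerton-Dyer relations $\partial\tilde A=\tilde B$, $\partial\tilde B=-e_4\tilde A$, to reduce everything to $r_1=\tilde A^2$. Finally it obtains $\varphi(r_1)=1$ by evaluating $R^p-R$ on $\frac{\tilde B-e_2\tilde A}{12}$, $\tilde A^2$ and $e_4^3-e_6^2$.

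You instead compare the three coordinate coefficients $R^p(e_{2i})$ with $q$-expansions directly. Beyond the bracket and weight arguments shared with the paper, this needs only $E_{p-1}\equiv 1$, $E_{p+1}\equiv E_2$, $\theta^p=\theta$ and $\ker\varphi=(\tilde A-1)$. It works on $\mathbf{A}^3_k$ from the start and avoids the differential relations for $\tilde A,\tilde B$ entirely.

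What the paper's route buys in exchange is the decomposition \eqref{eq:p-th_power_2} in the basis $R,F,H$, together with the role of the first integral $\tilde B-e_2\tilde A$. It uses these for the $p$-curvature corollary. From your formula, that decomposition is a linear-algebra computation away.
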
	
    
    Next, the affine scheme 
    \begin{equation}
        \label{eq:U}
                U := \Spec \mathbf{Z}[1/6, e_2,e_4,e_6,(e_4^3 - e_6^2)^{-1}]
    \end{equation}
    represents a certain moduli problem of elliptic curves over \(\mathbf{Z}[1/6]\) (see \cref{subsec:moduli_space} for a brief explanation and \cite{movasati-2012}, \cite{tiago_higher_ramanujan_equations} for a more detailed account). Given an algebraically closed field \(k\) of characteristic \(p \geq 5\), a natural question is to consider the locus given by classes of supersingular elliptic curves in \(U_k = U \otimes k\) and how it interacts with the Ramanujan vector field. We show that the Ramanujan vector field is transversal to this locus. In order to do that, we use a well-known representation of the so-called \emph{supersingular polynomial}
    \[
       ss_p(t) := \prod_{E/k \text{ supersingular}} (t- j(E)),
    \]
    in terms of the polynomial \(\tilde A\) of \cref{thm:main_theorem} (see \cite{kaneko-zagier-supersingular_j-invariants}). Then, denoting 
    \[
        J:  U_k \mapsto \mathbf{A}^1_k
    \]
    the morphism of \(k\)-schemes given by the \(j\)-invariant we consider the \emph{supersingular locus} as the reduced scheme\footnote{The non-reduced scheme has multiple components (see \cref{prop:unreduced_scheme}) and we expect that the interpretation of this behavior might become clear when doing the same process in the setting of moduli stacks.}\ 
    \begin{equation}
        \label{eq:supersingular_locus}
        SS_p := J^{-1}(V(ss_p))_{\text{red}} \subset U_k
    \end{equation}
    where \(V(ss_p) \subset \mathbf{A}^1_k\) is the zero set of \(ss_p\). Lastly, we use \cref{thm:main_theorem} to give another description of the supersingular locus as the singular set of the vector field \(R^p + \left(\frac{\tilde B}{12}\right)^2 \cdot F\). The statements are as follows.

\begin{theo}
    \label{thm:supersingular_locus}
    Let \(k\) be an algebraically closed field of characteristic \(p\geq 5\).  
    \begin{enumerate}
        \item The supersingular locus \(SS_p \subset U_k\), as defined in \cref{eq:supersingular_locus}, is the zero set of \(\tilde A(e_4,e_6)\);
        \item \(SS_p\) has \(n_p := \# \{\text{supersingular } j \text{-values in } k\}\) irreducible components, disjoint from each other and each component is a smooth surface;
        \item \(SS_p\) is the singular set of the vector field \(R^p + \left(\frac{\widetilde B}{12}\right)^2 \cdot F\);
        \item The Ramanujan vector field is transversal to \(SS_p\) in every point, that is, \(SS_p\) has no \(R\)-invariant subvarieties.
    \end{enumerate}
\end{theo}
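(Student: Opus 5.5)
The plan is to reduce everything to classical facts about \(\tilde A\), the reduction of \(E_{p-1}\), and to the explicit formula \eqref{eq:p-th_power} of \cref{thm:main_theorem}. Throughout, write \(\Delta = (e_4^3 - e_6^2)/1728\) and \(j = e_4^3/\Delta\) on \(U_k\).

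\emph{Part (1).} I will use the representation of the supersingular polynomial due to Kaneko--Zagier. Since \(\tilde A\) is isobaric of weight \(p-1\) in \(e_4,e_6\), write \(p-1 = 12m + 4\delta + 6\epsilon\) with \(\delta,\epsilon\in\{0,1\}\). Then one has an identity
\[
\tilde A(e_4,e_6) = \Delta^m e_4^\delta e_6^\epsilon\, f(j),
\]
where \(f(t)\) equals \(ss_p(t)\) with the possible factors \(t\) and \(t-1728\) removed, and these excluded factors are accounted for by \(e_4^\delta\) and \(e_6^\epsilon\). (Recall \(j=0\) is supersingular iff \(p\equiv 2 \bmod 3\), which is exactly when \(\delta=1\); similarly \(j=1728\) is supersingular iff \(p\equiv 3\bmod 4\), exactly when \(\epsilon=1\).) Because \(\Delta\) is invertible on \(U_k\), the factor \(\Delta^m\) contributes no zeros. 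Moreover \(V(e_4)=J^{-1}(0)\) and \(V(e_6)=J^{-1}(1728)\) on \(U_k\). Hence \(V(\tilde A)_{\rm red} = J^{-1}(V(ss_p))_{\rm red}\).

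\emph{Part (2).} The components are \(J^{-1}(j_0)\), one for each supersingular \(j_0\), and they are clearly disjoint. For \(j_0\neq 0,1728\), the fibre is \(\{e_4^3 - j_0\Delta = 0\}\), i.e. \(e_4^3(1728-j_0) = -j_0e_6^2\) after scaling, with \(e_2\) free. This is \(\mathbf{A}^1_{e_2}\) times a punctured cuspidal cubic. Off the origin the cubic is smooth, and it is irreducible because the cubic \(a e_4^3 = b e_6^2\) is parametrized by \(s\mapsto (cs^2, ds^3)\). The origin itself is excluded since \(\Delta\neq0\). For \(j_0=0\) the component is \(\{e_4=0\}\), and for \(j_0=1728\) it is \(\{e_6=0\}\); both are smooth, and the condition \(\Delta\ne0\) is an open condition. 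Irreducibility follows from the parametrization \(\mathbf{A}^1\times\mathbf{G}_m\to J^{-1}(j_0)\), \((e_2,s)\mapsto(e_2,cs^2,ds^3)\), which is surjective.

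\emph{Part (3).} Adding \((\tilde B/12)^2 F = -\tilde B^2/12\,\partial_{e_2}\) to \eqref{eq:p-th_power} kills the \(\tilde B^2\) term, which gives
\[
R^p + (\tilde B/12)^2F = \tilde A\Big(-\tfrac{e_4\tilde A}{12}\partial_{e_2} + \tfrac{e_4\tilde B-e_6\tilde A}{3}\partial_{e_4} + \tfrac{e_6\tilde B-e_4^2\tilde A}{2}\partial_{e_6}\Big).
\]
The zero set therefore contains \(V(\tilde A)\). Off \(V(\tilde A)\), suppose all three remaining coefficients vanish. The first coefficient forces \(e_4=0\). Then the second gives \(e_6\tilde A=0\), hence \(e_6=0\), which contradicts \(\Delta\ne0\). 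So the singular set is exactly \(V(\tilde A)=SS_p\), by (1).

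\emph{Part (4).} This is the main step. It suffices to show \(R(\tilde A)\) does not vanish on \(V(\tilde A)\), i.e. \(R\) is not tangent to the smooth hypersurface \(SS_p\) (using (2) for reducedness of the relevant factors). Here I use the Serre derivative: \(R\) acts on weight-\(k\) polynomials in \(e_4,e_6\) as \(\vartheta_k + \frac{k}{12}e_2\), where \(\vartheta\) is the Serre derivative. The Serre--Swinnerton-Dyer relation \(E_{p+1}\equiv E_2E_{p-1} \cdot\) (derivative relation), i.e. \(\theta E_{p-1}\equiv \frac{p-1}{12}(E_2E_{p-1}-E_{p+1})\) modulo \(p\), gives the identity
\[
R(\tilde A) = -\tfrac{1}{12}(e_2\tilde A - \tilde B).
\]
On \(V(\tilde A)\) this reduces to \(\tilde B/12\). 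The key fact is then Serre's result that \(\tilde A\) and \(\tilde B\) have no common zero; equivalently \(\tilde B\) is coprime to \(\tilde A\), and \(\tilde A\) has simple roots in \(j\). The obstacle is ensuring this coprimality, including at the factors \(e_4,e_6\). I would invoke Serre--Swinnerton-Dyer's theorem that \(\tilde A\) has no repeated factors and is prime to \(\tilde B\). Transversality at every point then follows.
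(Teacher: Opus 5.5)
Your proposal is correct and follows essentially the same route as the paper: Kaneko--Zagier for (1), the explicit formula of \cref{thm:main_theorem} for (3), and $R(\tilde A)=\frac{1}{12}(\tilde B-e_2\tilde A)$ together with the coprimality of $\tilde A$ and $\tilde B$ (\cref{lemma:factorization}) for (4). The differences are minor: you read off (2) from the fibres of $J$ rather than from the factorization of $\tilde A$, and you prove (3) pointwise rather than via $6e_6(\cdot)-4e_4(\cdot)=2\tilde A^2(e_4^3-e_6^2)$.

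One caution. Derive $R(\tilde A)=\frac{1}{12}(\tilde B-e_2\tilde A)$ from the polynomial identity $\partial\tilde A=\tilde B$ of \cref{prop:swinnerton-dyer}, using $H\tilde A=-\tilde A$. Do not derive it from the mod-$p$ congruence for $\theta E_{p-1}$. That congruence is vacuous, since both sides are $\equiv 0$, and it cannot be lifted to $\mathbf{F}_p[e_4,e_6]$ anyway, because the map to $\mathbf{F}_p[[q]]$ has kernel $(\tilde A-1)$.
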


   Another possible approach to obtain information about the \(p\)-th power \(R^p\) is the geometric one. For example, if one considers the family \(\mathcal{E}/U\) of elliptic curves described in \cref{eq:univ_family}, it is known (see \cite{movasati-2012}) that -- with the choice of a suitable basis of \(H^1_{dR}(\mathcal{E}/U)\) -- the Ramanujan vector field satisfies a simple linear equation with respect to the Gauss-Manin connection \(\nabla\) in \(H^1_{dR}(\mathcal{E}/U)\). From this, one may try to apply general theorems (for instance, Katz's theorems on the nilpotence of the Gauss-Manin connection \cite{katz_nilpotent} or the solution of Grothendieck-Katz conjecture for Picard-Fuchs equations \cite{katz-72}) to this case in order to obtain information about the \(p\)-curvature. An approach like this would be interesting since it may allow generalizations for foliations with similar structure defined over certain moduli spaces of abelian varieties (see \cite{tiago_higher_ramanujan_equations}) or even Calabi-Yau manifolds (see the motivational part of the introduction of \cite{cao-movasati-villaflor} and references therein).

    We chose not to follow this path since the explicit nature of our methods resulted in very precise formulas that work for every prime \(p \geq 5\). Besides the formula for \(R^p\), our computations allowed us to obtain, as a by-product, explicit factorizations of the polynomials \(\tilde A\) and \(\tilde B\) (see \cref{lemma:factorization}). Furthermore, the techniques used are elementary in nature and based in results that are known for a very long time.

    \subsection*{Aknowledgements}

I gladly acknowledge that this work grew out of a number of fruitful discussions with my PhD advisor, Hossein Movasati. I also had many discussions with Tiago Fonseca, who carefully revised parts of the text. Felipe Espreafico and Roberto Villaflor helped in moments when I was stuck. I leave my heartfelt thanks to them. I would also like to thank the staff at IMPA for the wonderful environment they provide. The present work was carried out with the support of Conselho Nacional de Desenvolvimento Cient\'ifico e Tecnológico of Brazil (CNPq) and partially developed at the Institut des Hautes Études Scientifiques (IHES) in France, where I was visiting for a month in 2024.

%% file: the-p-th-power.tex
\section{The \texorpdfstring{$p$}{p}-th power of the Ramanujan vector field} 

    \subsection{Modular and quasi-modular forms} 
   
    We briefly review some results and definitions about modular and quasi-modular
    forms for \(SL_2(\mathbf{Z})\) and their characteristic \(p\) counterparts. In what follows, given \(S\) a \(\mathbf{Z}[1/6]\)-algebra, we consider in \(S[e_2,e_4,e_6,(e_4^3-e_6^2)^{-1}]\) the graded structure induced by assigning 
    \begin{align*}
        \deg e_2 &= 2 \\
        \deg e_4 &= 4 \\
        \deg e_6 &= 6.
    \end{align*}
    Thus, by a \emph{homogeneous polynomial of degree \(d\)} we mean a polynomial that has every monomial of degree \(d\) according to the gradation described above.  
    
    \subsubsection{Eisenstein series and modular forms}
    
    For each positive even integer \(\nu\) let \(E_{\nu}\) denote the \(\nu\)-th normalized Eisenstein series 
    \[
		E_{\nu}(\tau) := 1 - \frac{2\nu}{B_{\nu}} \sum_{n=1}^{\infty} \sigma_{\nu-1}(n) q^n, \,\, \tau \in \mathbf{H}
	\]	
    where \(B_{\nu}\) is the \({\nu}\)-th Bernoulli number, \(\sigma_{\mu}(n) := \sum_{d \mid n} d^{\mu}\), \(q(\tau) := e^{2\pi i \tau}\) and \(\mathbf{H}\) is the Poincaré upper half-plane. For any \(\nu \geq 4\), the Eisenstein series \(E_\nu\) is a modular form of weight \(\nu\) for \(SL_2(\mathbf{Z})\) and \(E_2\) is a quasi-modular form for the same group. 

    The graded ring \(\mathbf{C}[e_4,e_6]\) is isomorphic to the \(\mathbf{C}\)-algebra \(\mathcal{M}(\mathbf{C})\) of modular forms for \(SL_2(\mathbf{Z})\) with grading given by weight. This isomorphism is given by \(e_i \mapsto E_i, i = 4,6\). Similarly, assigning degree \(2\) to the additional variable \(e_2\), the graded ring \(\mathbf{C}[e_2,e_4,e_6]\) is isomorphic to the \(\mathbf{C}\)-algebra \(\mathcal{QM}(\mathbf{C})\) of quasi-modular forms for \(SL_2(\mathbf{Z})\) (see \cite{kaneko-zagier-jacobi_theta_function}) where \(e_2 \mapsto E_2\). Lastly, consider the degree 12 polynomial  
    \[
        e_4^3 - e_6^2 \in \mathbf{C}[e_4,e_6]
    \]
    which corresponds to a multiple of the modular discriminant. Then, the \(\mathbf{C}\)-algebra \(\mathbf{C}[e_4,e_6, (e_4^3 - e_6^2)^{-1}]\) is isomorphic to the algebra of \emph{weakly holomorphic modular forms}, i.e., those modular forms might have a pole at infinity. Lastly, the algebra \(\mathbf{C}[e_2,e_4,e_6, (e_4^3 - e_6^2)^{-1}]\) corresponds to \emph{weakly holomorphic quasi-modular forms}. We remark that this is exactly the ring of regular functions of \(U_\mathbf{C}\) (see \cref{eq:U}).

    \subsubsection{The Ramanujan-Serre derivative}
    \label{subsec:Ramanujan-Serre}
    In the following, we only consider modular and quasi-modular forms for \(SL_2(\mathbf{Z})\). For each even integer \(\nu \geq 4\), given a modular form \(f\) of weight \(\nu\), the operation
    \begin{equation}
        \label{eq:Ramanujan-Serre_derivative}
        \partial_\nu(f) := 12\theta f - \nu E_2 f
    \end{equation}
    is called the \emph{Ramanujan-Serre derivative} of \(f\) and it defines a homogeneous derivation of weight \(2\), meaning that \(\partial_\nu(f)\) is a modular form of weight \(\nu + 2\). Here, \(\theta \) denotes the derivation 
    \[
        q \frac{\partial}{\partial q} \left( \sum_{n \geq s}^{\infty} a_n q^n \right) = \sum_{n \geq s}^{\infty} n a_n q^n
    \]
    defined for any formal Laurent series.

    We make a few observations about the Ramanujan-Serre derivative and \(\theta\).  Firstly, the fact that the triple \((E_2,E_4,E_6)\) is a solution to the system \eqref{Ramanujan_system} is equivalent to the fact that the isomorphism 
    \begin{equation}
        \label{eq:isomorphism/C}
        \varphi: \mathbf{C}[e_2,e_4,e_6] \stackrel{\sim}{\longrightarrow} \mathcal{QM}(\mathbf{C}), \,\,\,\,\ e_2,e_4,e_6 \mapsto E_2,E_4,E_6
    \end{equation}
    fits in the commutative diagram
    \begin{equation} 
        \label{eq:commutative_diagram/C}
        \begin{tikzcd}
	       {\mathbf{C}[e_2,e_4,e_6] } & {\mathcal{QM}(\mathbf{C})} \\
	       {\mathbf{C}[e_2,e_4,e_6] } & {\mathcal{QM}(\mathbf{C})}
	       \arrow["\sim", from=1-1, to=1-2]
	       \arrow["R"', from=1-1, to=2-1]
	       \arrow["\theta", from=1-2, to=2-2]
	       \arrow["\sim"', from=2-1, to=2-2]
        \end{tikzcd}.
    \end{equation}
    This also corresponds to the curve \(\tau \mapsto (E_2(\tau), E_4(\tau), E_6(\tau))\) in \(\mathbf{C}^3\) being a leaf of \(\mathcal{F}(R)\) through the point \((1,1,1)\).

    The second and third observations are that the commutativity of diagram \eqref{eq:commutative_diagram/C} is preserved ``over \(\mathbf{Z}[1/6]\)" and that it also works in the weakly holomorphic setting. More precisely, for any \(\mathbf{Z}[1/6]\)-algebra \(S\), the \(S\)-algebra homomorphism
    \begin{equation}
        \label{eq:isomorphism/S}
        \varphi_S: S[e_2,e_4,e_6, (e_4^3 - e_6^2)^{-1}] \longrightarrow S((q)), \,\,\,\,\ e_2,e_4,e_6 \mapsto E_2,E_4,E_6
    \end{equation}
    is well defined and fits in the commutative diagram
    \begin{equation} 
        \label{eq:commutative_diagram/S}
        \begin{tikzcd}
	       {S[e_2,e_4,e_6,(e_4^3 - e_6^2)^{-1}] } & S((q)) \\
	       {S[e_2,e_4,e_6,(e_4^3 - e_6^2)^{-1}] } & S((q))
	       \arrow[from=1-1, to=1-2]
	       \arrow["R"', from=1-1, to=2-1]
	       \arrow["\theta", from=1-2, to=2-2]
	       \arrow[from=2-1, to=2-2]
        \end{tikzcd}
        .
    \end{equation}
    Here, \(S((q))\) denotes the ring of formal Laurent series with coefficients in \(S\). This fact is used in the proof of \cref{thm:main_theorem} (see \cref{eq:formal_solution}). 
    
    Lastly, we state the Ramanujan-Serre derivative version of \eqref{eq:commutative_diagram/S}. Consider the homomorphism 
     \begin{equation}
        \label{eq:isomorphism/2S}
        \hat \varphi_S: S[e_4,e_6] \longrightarrow S[[q]], \,\,\,\,\ e_4,e_6 \mapsto E_4, E_6.
    \end{equation}
    Then, for any homogeneous polynomial \(P(e_4,e_6)\) of degree \(\nu\) we have 
    \[\partial_\nu \circ \hat \varphi_S(P) \in S[[q]],\]
    since \(E_2\) has coefficients in \(\mathbf{Z}\). Moreover, we have the identity 
      \begin{equation} 
        \label{eq:commutative_diagram/Ram-Serre}
        \partial_\nu \circ \hat \varphi_S (P) = \hat \varphi_S \circ \partial(P)
    \end{equation}
    where 
    \begin{equation}
        \label{eq:Ramanujan-Serre_vf}
        \partial := - 4 e_6 \frac{\partial}{\partial e_4} - 6 e_4^2 \frac{\partial}{\partial e_6}
    \end{equation}
    is a derivation of \(S[e_4,e_6]\). We call \(\partial\) the \emph{Ramanujan-Serre vector field}. 

    \begin{rmk}
        The Ramanujan vector field can be written in the polynomial algebra \(S[e_2,e_4,e_6]\) as the derivation 
        \begin{equation}
            \label{eq:ramanujan_vf_2}
            R = \frac{1}{12}(\partial + e_2 H) 
        \end{equation}
        where \(H\) is the Euler vector field of this algebra as defined in \cref{eq:basis}.
    \end{rmk}
    
    \subsubsection{Modular forms modulo \texorpdfstring{$p$}{p}}   
    \label{sec:mod_forms_mod_p}
    
    We outline a small part of what was developed by Swinnerton-Dyer and Serre regarding the reduction of modular forms modulo primes (see \cite{swd-modl} and \cite{serre-congruences-et-formes-modulaires}). We reproduce the proof of Theorem 2 of \cite{swd-modl} for completeness and since it illustrates the arithmetical applications of the fact that the polynomials \(\tilde A\) and \(\tilde B\) are satisfy the differential relations 
    \begin{equation*}
        \begin{cases}
            \partial \tilde A = \tilde B \\
            \partial \tilde B = -e_4 \tilde A
        \end{cases}
    \end{equation*}
    with respect to the vector field \cref{eq:Ramanujan-Serre_vf}. In the end of the section we show that this implies the fact that the Ramanujan vector field has \( \tilde B - e_2  \tilde A\) as a first integral modulo \(p\). This is interesting since it contrasts with the fact that, in characteristic 0, \(R\) has \(\{e_4^3 - e_6^2 = 0\}\) as the only invariant hypersurface. We believe that \(\{e_4^3 - e_6^2 = 0\}\) and \( \{\tilde B - e_2  \tilde A = 0\} \) are the only \(R\)-invariant hypersurfaces modulo \(p\). Lastly, we would like to acknowledge that the fact that \(\tilde B - e_2 \tilde A\) is a first integral modulo \(p\) for \(R\) was earlier discussed by Movasati in \cite{movasati2025leafschemeshodgeloci}, where we learned this from.
    
    In what follows, we denote by \(p \geq 5\) a prime number, \(\mathbf{Z}_{(p)}\) the ring of \(p\)-integral rational numbers (rational numbers with denominator coprime to \(p\)) and \(\Delta := E_4^3 - E_6^2/1728\) the modular discriminant. Furthermore, we denote with a tilde the reduction modulo \(p\) of a polynomial with \(p\)-integral coefficients, that is, if \(P(e_4,e_6)\) is a polynomial with \(p\)-integral coefficients, \(\tilde P(e_4,e_6)\) denotes its image in \(\mathbf{F}_p[e_4,e_6]\).

    Let \(f = \sum_{n=0}^{\infty} a_n q^n\) be a modular form of weight \(\nu\) and consider \(G := \mathbf{Z} a_0 + \mathbf{Z} a_1 + \cdots \) the additive subgroup of \(\mathbf{C}\) generated by the coefficients of \(f\). Choose positive integers \(i,j\) such that \(\nu = 4i + 6j\) and consider 
    \[
        g := f - a_0 E_4^i E_6^j.
    \]
    There are two possiblities for \(g\): either \(g = 0\) (this happens when \(\nu < 12\)) or \(g\) is a cusp form of weigth \(\nu \geq 12\). In the second case, we can write \(g = \Delta \hat g\) for some modular form \(\hat g\) of weight \(\nu-12\). Thus, either \(f = a_0 E_4^i E_6^j\) or \( f = a_0 E_4^i E_6^j + \Delta \hat g\). Since \(f\) and \(a_0 E_4^i E_6^j\) have coefficients in \(G\) and \(\Delta\) has coefficients in \(\mathbf{Z}\), it is easy to show that \(\hat g\) has coefficients in \(G\). By induction on the weight of \(f\), we conclude that \emph{every modular form of pure weight is a polynomial in \(E_4, E_6\) and \(\Delta\) with coefficients in the additive group generated by its coefficients}. Moreover, since \(\Delta = \frac{E_4^3 - E_6^2}{1728}\) and \(1728 = 2^6 3^3\), if \(f\) has rational coefficients that are \(p\)-integral (recall that \(p \geq 5\)) then \(f\) is a polynomial in \(E_4\) and \(E_6\) with coefficients in \(G\). 

    Therefore, we have a bijection between the polynomial ring \(\mathbf{Z}_{(p)}[e_4,e_6]\) and the \(\mathbf{Z}_{(p)}\)-algebra consisting of all modular forms with \(p\)-integral \(q\)-expansion.
    
    \begin{defi}[\cite{swd-modl}, \textsection 3] The ring of \emph{modular forms modulo \(p\)} is the image of the composition  
    \[
       \mathbf{Z}_{(p)}[e_4,e_6] \longrightarrow \mathbf{Z}_{(p)}[[q]]  \longrightarrow \mathbf{F}_p[[q]],
    \]
    where the first map is given by \(e_i \mapsto E_i\), \(i = 4,6\) and the second one is given by reduction modulo \(p\) of the coefficients. We denote this ring by \(\mathcal{M}(\mathbf{F}_p)\).
    \end{defi}
    
    We want to give a good description of \( \mathcal{M}(\mathbf{F}_p)\). For that purpose, consider the ring homomorphism 
    \begin{equation}
    \label{eq:morphism_mod_forms_mod_p}
        \mathbf{F}_p[e_4,e_6] \rightarrow \mathcal{M}(\mathbf{F}_p), \quad e_i \mapsto \text{ reduction mod } p \text{ of } E_i.
    \end{equation}

    By the discussion in the beginning of the section, this is surjective, thus we may try to describe its kernel.
    In practice, this means finding congruences modulo \(p\) for modular forms. Using congruences for Bernoulli numbers -- the so-called Kummer and von-Staudt Clausen congruences -- Swinnerton-Dyer and Serre (see \cite{swd-modl}, page 22) show that both \( E_{p-1} \) and \(E_{p+1}\) have \(p\)-integral \(q\)-expansion and that
    \begin{align}
    \label{eq:von-staudt-kummer}
    E_{p-1} \equiv_p 1 \, \text{ and } \,  E_{p+1} \equiv_p E_2.
    \end{align}
    We denote \(A\) and \(B\) the unique polynomials in \( \mathbf{Z}_{(p)}[e_4,e_6]\) satisfying
    \begin{align}
        \label{eq:def_AB}
        \begin{split}
        A(E_4,E_6) &= E_{p-1} \\
        B(E_4,E_6) &= E_{p+1}
        \end{split}.
    \end{align}
    
    \begin{prop}[\cite{swd-modl}, Theorem 2]
        \label{prop:swinnerton-dyer}
        For each prime \(p \geq 5\) we have
        \begin{enumerate}
            \item The polynomials \(\tilde A\) and \(\tilde B\) satisfy
              \begin{equation}
                \label{eq:diff_eq_A_B}
                \begin{cases}
                \partial \tilde A = \tilde B \\
                \partial \tilde B = -e_4 \tilde A
                \end{cases},
              \end{equation}
            where \(\partial\) is the vector field of \cref{eq:Ramanujan-Serre_vf};
            \item \(\tilde A\) has no repeated factor and is prime to \(\tilde B\) in \(\overline{\mathbf{F}_p}[e_4,e_6]\);
            \item \(
        \mathcal M(\mathbf{F}_p) \cong \frac{\mathbf{F}_p[e_4,e_6]}{(\tilde A -1)} \).
        \end{enumerate}
    \end{prop}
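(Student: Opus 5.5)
Part (1) comes first, and the key tool is the Ramanujan--Serre identity \eqref{eq:commutative_diagram/Ram-Serre} applied over \(S=\mathbf{Z}_{(p)}\), together with the congruences \eqref{eq:von-staudt-kummer}. Since \(A\) is homogeneous of degree \(p-1\), we have
\[
\hat\varphi(\partial A)=12\theta E_{p-1}-(p-1)E_2E_{p-1}.
\]
Reducing modulo \(p\) and using \(E_{p-1}\equiv 1\) gives \(\theta E_{p-1}\equiv 0\), so \(\hat\varphi(\partial A)\equiv E_2 \equiv E_{p+1}=\hat\varphi(B)\). Thus \(\partial A - B\) is a form of weight \(p+1\) whose \(q\)-expansion vanishes mod \(p\). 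To conclude \(\partial\tilde A=\tilde B\) in \(\mathbf{F}_p[e_4,e_6]\), I would use that the map \(\mathbf{F}_p[e_4,e_6]\to\mathbf{F}_p[[q]]\) is injective on homogeneous pieces of a fixed degree. This follows from the integral basis \(E_4^iE_6^j\Delta^k\) argument given before the definition: a \(p\)-integral form whose expansion is divisible by \(p\) is \(p\) times a \(p\)-integral polynomial.

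The second relation is handled the same way. Since \(B\) has degree \(p+1\),
\[
\hat\varphi(\partial B)=12\theta E_{p+1}-(p+1)E_2E_{p+1}\equiv 12\theta E_2-E_2^2=-E_4 \pmod p,
\]
by Ramanujan's relation \(12\theta E_2=E_2^2-E_4\). Since \(E_4\equiv E_4E_{p-1}=\hat\varphi(e_4A)\), this reads \(\partial B\equiv -e_4A\) in weight \(p+3\). Injectivity in fixed weight again gives \(\partial\tilde B=-e_4\tilde A\).

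For (2), suppose an irreducible \(\pi\) in \(\overline{\mathbf{F}_p}[e_4,e_6]\) satisfies \(\pi^2\mid\tilde A\), or \(\pi\mid\tilde A\) and \(\pi\mid\tilde B\). In the first case \(\pi\mid\partial\tilde A=\tilde B\), so both cases reduce to \(\pi\mid\tilde A\) and \(\pi\mid\tilde B\). The plan is then a descent on multiplicity. From \(\tilde A=\pi^a u\) and \(\tilde B=\partial\tilde A=\pi^{a-1}(a\,\partial\pi\,u+\pi\,\partial u)\), repeated differentiation using \(\partial\tilde B=-e_4\tilde A\) should force either \(\pi\mid\partial\pi\), i.e. \(\pi\) is \(\partial\)-invariant, or \(a\equiv 0\pmod p\). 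The \(\partial\)-invariant irreducibles in weighted-homogeneous \(\overline{\mathbf F}_p[e_4,e_6]\) of small degree are essentially \(e_4^3-e_6^2\); in general one uses that \(\partial(e_4^3-e_6^2)=0\) and that invariant curves of \(\partial\) are level sets of this first integral or \(e_4,e_6\) components. The multiplicity \(a\) is bounded by \(\deg\tilde A/\deg\pi<p\). One then excludes these cases by checking that \(\tilde A\) is not divisible by \(\Delta\); this holds because its constant \(q\)-term is \(1\), whereas \(\Delta\) is a cusp form.

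For (3), the kernel contains \(\tilde A-1\). Conversely, if \(\tilde P\) maps to \(0\), split it into homogeneous components and compare weights mod \(p-1\): components in different residue classes have independent \(q\)-expansions, because of the filtration theory, and multiplying by powers of \(\tilde A\) brings them into a single weight where injectivity holds. The main obstacle is exactly this step, namely showing that a relation among different weights in the same class mod \(p-1\) comes from \(\tilde A-1\). I would first try induction on the number of components, using that \(\tilde A\) is not a \(p\)-th power by (2).
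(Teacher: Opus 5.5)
Your part (1) is the paper's argument: the Ramanujan--Serre identity over $\mathbf{Z}_{(p)}$, the congruences $E_{p-1}\equiv 1$ and $E_{p+1}\equiv E_2$, and the fact that a $p$-integral form whose $q$-expansion is divisible by $p$ is $p$ times a $p$-integral polynomial in $e_4,e_6$. Your part (2) is the multiplicity descent of Lemma~\ref{lemma:factorization}, phrased uniformly.

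One point in (2) needs fixing. $e_4$ and $e_6$ are \emph{not} $\partial$-invariant. Also, ``invariant curves are level sets of $e_4^3-e_6^2$'' fails in characteristic $p$: for example, $e_4^p$ is also a first integral of $\partial$. What you actually need is the direct computation $\partial e_4=-4e_6$, $\partial e_6=-6e_4^2$, $\partial(e_4^3-\alpha e_6^2)=-12(1-\alpha)e_4^2e_6$. It shows that the only homogeneous irreducible dividing its own $\partial$-derivative is $e_4^3-e_6^2$. Combined with $2\le a<p$ (so $p\nmid a(a-1)$) and a direct treatment of $a=1$, your descent is then complete.

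The genuine gap is in (3). You cannot import the independence of components in different classes mod $p-1$ from ``filtration theory''. In the Swinnerton-Dyer--Serre theory, the $\mathbf{Z}/(p-1)$-grading of $\mathcal M(\mathbf F_p)$, and with it the filtration $w(f)$, is \emph{deduced} from $\ker=(\tilde A-1)$, using that this ideal is homogeneous for that grading. So as written the argument is circular, and you offer no independent proof.

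Conversely, the step you call the main obstacle is immediate. Suppose every component $P_{k_i}$ has $k_i\equiv c \pmod{p-1}$, and let $K=\max_i k_i$. Then $\tilde P\equiv \sum_i \tilde A^{(K-k_i)/(p-1)}P_{k_i}$ modulo $\tilde A-1$. The right-hand side is homogeneous of degree $K$ with zero $q$-expansion, hence is $0$.

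The missing idea is the paper's dimension argument, which never separates weights:
\begin{enumerate}
\item The kernel $\mathfrak a$ is prime, since $\mathcal M(\mathbf F_p)\subset\mathbf F_p[[q]]$ is a domain.
\item It is not maximal, since $\mathcal M(\mathbf F_p)$ is not a field. Every unit has nonzero constant term, but $E_4-1=240q+\cdots$ for $p\ge 7$ (resp.\ $E_6-1=-504q+\cdots$ for $p=5$) is nonzero and has none.
\item Hence $\mathfrak a$ has height one in the two-dimensional ring $\mathbf F_p[e_4,e_6]$. As it contains the prime element $\tilde A-1$, it equals $(\tilde A-1)$.
\end{enumerate}
The one extra input is irreducibility of $\tilde A-1$. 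The paper uses this, and it follows from (2): a nontrivial factorization of $\tilde A-1$ would force $\tilde A$ to be a proper power, contradicting squarefreeness. The cross-weight independence you wanted then becomes a corollary rather than an ingredient.
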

    \begin{proof}
        The polynomial \(\partial A - B\) is homogeneous of degree \(p+1\). By  \eqref{eq:commutative_diagram/Ram-Serre}
        \[
            f = \partial A(E_4,E_6) - B(E_4,E_6) = 12 \theta E_{p-1} - (p-1) E_2 E_{p-1} - E_{p+1}
        \]
        is a modular form of weight \(p+1\) and -- by \eqref{eq:von-staudt-kummer} -- it reduces to zero modulo \(p\). Therefore, all the coefficients of \(f\) are divisible by \(p\). By the discussion in the beginning of the section, the coefficients of \(\partial A(e_4,e_6) - B(e_4,e_6)\) are also divisible by \(p\). This shows that \(\partial \tilde A = \tilde B\). A similar argument works for the other identity. Item (2) of the lemma will be independently proven in a subsequent section. 

    Next, consider the homomorphism \cref{eq:morphism_mod_forms_mod_p} and denote by \(\mathfrak{a}\) its kernel. Notice that:
    \begin{enumerate}
        \item \((\tilde A-1) \subset \mathfrak a\) by the congruence \(E_{p-1} \equiv_p 1\); 
        \item \(\mathfrak a\) is prime, since \(\mathcal{M}(\mathbf{F}_p) = \mathbf{F}_p[E_4,E_6] \subset \mathbf{F}_p[[q]]\) is an integral domain. 
    \end{enumerate}
    We assert that \(\mathfrak a\) is \emph{not} a maximal ideal. To reach a contradiction, assume that it is. Equivalently, \(\mathbf{F}_p[E_4,E_6] \subset \mathbf{F}_p[[q]]\) is a field, thus every nonzero series \(f(q) \in \mathbf{F}_p[E_4,E_6]\) has an inverse in \(\mathbf{F}_p[E_4,E_6]\). We conclude that \(f(q) = u + O(q)\), where \(u \in \mathbf{F}_p^*\). This is not possible, since for primes \(p \geq 7\) the series \(E_4 -1 = 240 q + O(q^2)\) is not zero and has no constant term. Similarly, for \(p = 5\) the series \(E_6 -1 = -504 q + O(q^2)\) neither has a constant term. We conclude that \(\mathfrak a\) is not maximal. Since \(\mathbf{F}_p[E_4,E_6]\) has dimension \(\leq 2\) and \(\tilde A - 1\) is prime, we must have \(\mathfrak a = (\tilde A - 1)\). In conclusion, 
    the surjective morphism \cref{eq:morphism_mod_forms_mod_p} induces the isomorphism
    \begin{equation*}
        \mathcal M(\mathbf{F}_p) \cong \frac{\mathbf{F}_p[e_4,e_6]}{(\tilde A -1)}.
    \end{equation*}
    \end{proof}

    \begin{lemma}
    \label{lemma:firstintegral}
    The polynomials \(\tilde A\) and \(\tilde B\) satisfy
        \[
            R(\tilde B - e_2 \tilde A) = 0. 
        \]
        Furthermore, if \(X_1, X_2 \in \mathbf{F}_p[e_4,e_6]\) are homogeneous and satisfy \(\deg X_1 \equiv_p -1\), \(\deg X_2 \equiv_p 1\) then \(R(X_2 - e_2 X_1) = 0\) implies that \(\partial X_1 = X_2\) and \(\partial X_2 = - e_4 X_1\).
    \end{lemma}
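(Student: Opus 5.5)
We compute directly with the decomposition \(R = \frac{1}{12}(\partial + e_2 H)\) from \cref{eq:ramanujan_vf_2}, together with the explicit form of \(R\) in the \(e_2\)-direction. Let \(X_1, X_2 \in \mathbf{F}_p[e_4,e_6]\) be homogeneous of degrees \(d_1, d_2\). Since they do not involve \(e_2\), we have \(\frac{\partial}{\partial e_2}X_i = 0\). Euler's identity for the weighted grading gives \(H X_i = d_i X_i\), while \(H e_2 = 2e_2\) and \(\partial e_2 = 0\). Using the Leibniz rule,
\begin{align*}
12\,R(X_2 - e_2 X_1) &= \partial X_2 + d_2 e_2 X_2 - 12 R(e_2)\, X_1 - e_2\bigl(\partial X_1 + d_1 e_2 X_1\bigr),
\end{align*}
where \(12R(e_2) = e_2^2 - e_4\). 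Collecting the result as a polynomial in \(e_2\) over \(\mathbf{F}_p[e_4,e_6]\) gives
\[
12\,R(X_2 - e_2X_1) = \bigl(\partial X_2 + e_4 X_1\bigr) + e_2\bigl(d_2 X_2 - \partial X_1\bigr) - (1+d_1)\, e_2^2 X_1 .
\]

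Now impose the hypotheses \(d_1 \equiv_p -1\) and \(d_2 \equiv_p 1\). The coefficient of \(e_2^2\) vanishes in characteristic \(p\), and the coefficient of \(e_2\) becomes \(X_2 - \partial X_1\). So
\[
12\,R(X_2 - e_2X_1) = \bigl(\partial X_2 + e_4 X_1\bigr) + e_2\bigl(X_2 - \partial X_1\bigr).
\]
Since \(e_2\) is algebraically independent over \(\mathbf{F}_p[e_4,e_6]\) and \(12\) is invertible, \(R(X_2 - e_2X_1) = 0\) holds if and only if \(\partial X_1 = X_2\) and \(\partial X_2 = -e_4 X_1\). This gives the second assertion, and in fact its converse.

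For the first assertion, take \(X_1 = \tilde A\) and \(X_2 = \tilde B\). These are homogeneous of degrees \(p-1 \equiv_p -1\) and \(p+1 \equiv_p 1\): homogeneity survives reduction because \(A\) and \(B\) are homogeneous, being the unique polynomial expressions of the pure-weight forms \(E_{p-1}\) and \(E_{p+1}\). By \cref{prop:swinnerton-dyer}(1) they satisfy \(\partial\tilde A = \tilde B\) and \(\partial \tilde B = -e_4\tilde A\). The identity above then gives \(R(\tilde B - e_2\tilde A) = 0\).

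The only real care point is bookkeeping: checking that \(\frac{\partial}{\partial e_2}\) kills \(X_i\), so the \(-e_4\) term enters only through \(R(e_2)\). There is no genuine obstacle.
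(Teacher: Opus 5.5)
Your proof is correct and is the same argument as the paper's: expand \(12R(X_2 - e_2X_1)\) using \(12R = \partial + e_2H\) on \(e_2\)-free polynomials, kill the \(e_2^2\)-coefficient using \(\deg X_1 \equiv_p -1\), and compare coefficients in \(e_2\); the first assertion then follows from the converse direction together with \cref{prop:swinnerton-dyer}(1). You were right to use the explicit value \(12R(e_2) = e_2^2 - e_4\) instead of applying the remark's decomposition to \(e_2\) itself, where it would give \(e_2^2/6\) because \(\partial e_2 = 0\); the \(e_4\) in \(R(e_2)\) is exactly what produces the \(e_4X_1\) term in the paper's displayed equivalence.
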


    \begin{proof}
        This is an application of \cref{eq:ramanujan_vf_2}. Using this, it is easy to see that, if \(\deg X_1 \equiv_p -1\), \(\deg X_2 \equiv_p 1\) then 
        \[
            R(X_2 - e_2 X_1) = 0 \iff \partial X_2 + e_4 X_1 + e_2(X_2 - \partial X_1) = 0.
        \]
        Since \(X_1,X_2\) are independent of \(e_2\), we necessarily have \(\partial X_1 = X_2\) and \(\partial X_2 = - e_4 X_1\).
    \end{proof}
    
    \subsubsection{The polynomials \texorpdfstring{$\tilde A$}{A~} and \texorpdfstring{$\tilde B$}{B~}}
    In this section, we give explicit factorizations of \(\tilde A\) and \(\tilde B\) that work for every prime \(p \geq 5\). The first part of \cref{lemma:factorization} uses the same proof as \cite{swd-modl}. The computation of the exponents \(\delta'\) and \(\epsilon'\) in \cref{tab:delta_epsilon} are new as far as we know. 
    
    Given a nonzero homogeneous polynomial \(P(e_4,e_6) \in \overline {\mathbf{F}}_p [e_4,e_6]\) of degree \(d \geq 0\), we can write uniquely
    \[
        P(e_4,e_6) = \pi_0 e_4^{n_1} e_6^{n_2} \prod_{i = 1}^{n_3} (e_4^3 - \pi_i e_6^2)
    \]
    where \(\pi_0, \pi_1, \ldots, \pi_m \in \overline {\mathbf{F}}_p\) are nonzero although they may repeat \footnote{This is a direct consequence of the fact that a homogeneous polynomial in two variables with coefficients in a algebraically closed field is a product of linear factors}. 
    
    Using this, write 
    \begin{align}
        \label{eq:factorization}
        \begin{split}
        \tilde A(e_4,e_6) &= \alpha_0 e_4^{\delta} e_6^{\epsilon} \prod_{i = 1}^{m} (e_4^3 - \alpha_i e_6^2), \,\,\,\,\,\,\, 12m := p-1 - 4\delta - 6  \epsilon \\
        \tilde B(e_4,e_6) &= \beta_0 e_4^{\delta'} e_6^{\epsilon'} \prod_{j = 1}^{m'} (e_4^3 - \beta_j e_6^2), \,\,\,\,\,\,\, 12m' = p+1 - 4\delta' - 6 \epsilon'  
         \end{split}
    \end{align}
    for \(\alpha_i, \beta_j \in \overline {\mathbf{F}_p}\), \(0 \leq i \leq m\), \(0 \leq j \leq m'\). Throughout the text we will use this same notation, that is
    \begin{align*}
        m (\text{resp. } m') &:= \text{ number of factors } e_4^3 - * e_6^2 \text{ dividing } \tilde A (\text{resp. } \tilde B) \\
        \delta (\text{resp. } \delta') &:= \text{ number of factors } e_4 \text{ dividing } \tilde A (\text{resp. } \tilde B) \\
        \epsilon (\text{resp. } \epsilon') &:= \text{ number of factors } e_6 \text{ dividing } \tilde A (\text{resp. } \tilde B) .
    \end{align*}
    
    \begin{lemma}
        \label{lemma:factorization}
        Consider the unique factorization of \(\tilde A\) and \(\tilde B\) as in \eqref{eq:factorization}. Then, for every prime \(p \geq 5\),
        \begin{enumerate}
            \item \(\tilde A\) has only simple factors in its irreducible decomposition and all are different from those of \(\tilde B\). In particular, \(\tilde A\) and \(\tilde B\) are coprime;
            \item the values of \(\delta, \delta', \epsilon, \epsilon'\) can be computed for each possible class of \(p\) modulo 12 and are given in \cref{tab:delta_epsilon}.
        \end{enumerate}
    \end{lemma}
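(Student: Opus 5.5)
The plan is to exploit the differential system of \cref{prop:swinnerton-dyer}, namely \(\partial \tilde A = \tilde B\) and \(\partial \tilde B = -e_4\tilde A\), where \(\partial = -4e_6\frac{\partial}{\partial e_4} - 6e_4^2\frac{\partial}{\partial e_6}\). Combining the two equations gives the second order relation
\[
\partial^2 \tilde A = -e_4 \tilde A .
\]
We study how \(\partial\) acts on each type of irreducible factor \(e_4\), \(e_6\) and \(e_4^3-\alpha e_6^2\), and then compare multiplicities on both sides.

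\textbf{Action of \(\partial\) on the factors.} Direct computation gives
\[
\partial e_4 = -4e_6, \qquad \partial e_6 = -6e_4^2, \qquad \partial(e_4^3-\alpha e_6^2) = -12e_4^2e_6(1-\alpha).
\]
Write \(\tilde A = g^k h\) with \(g\) irreducible and \(g\nmid h\).
\begin{itemize}
\item If \(g=e_4^3-\alpha e_6^2\) with \(\alpha\neq 0,1\), then \(g\) does not divide \(\partial g\). Hence \(\partial \tilde A = g^{k-1}(k h\,\partial g + g\,\partial h)\) is divisible by exactly \(g^{k-1}\) as long as \(p\nmid k\), which holds since \(k<p\) for degree reasons.
\item Applying \(\partial\) again, \(\partial^2\tilde A\) is divisible by exactly \(g^{k-2}\) when \(k\geq 2\). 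But \(\partial^2\tilde A = -e_4\tilde A\) is divisible by \(g^k\), a contradiction. So \(k=1\).
\item Also \(g\nmid \tilde B = \partial\tilde A\), because \(g\) divides \(\partial \tilde A\) exactly \(g^{k-1}=g^0\) times.
\end{itemize}

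\textbf{The degenerate factors.} The factor \(\alpha=1\), i.e.\ \(e_4^3-e_6^2\), is \(\partial\)-invariant. I expect to exclude it using the \(q\)-expansion: \(\tilde A \equiv 1 \pmod p\) as a series, while \(e_4^3-e_6^2\) maps to \(1728\Delta\), which is divisible by \(q\). More precisely, if \(e_4^3-e_6^2\) divided \(\tilde A\), then \(E_{p-1}\bmod p\) would vanish at \(q=0\), which is absurd. For \(g=e_4\) or \(g=e_6\), the same multiplicity counting applies, using \(\partial e_4=-4e_6\) and \(\partial e_6 = -6e_4^2\), whose leading behaviour is controlled. This yields \(\delta,\epsilon\le 1\). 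Then:
\begin{itemize}
\item If \(\delta=1\), then \(e_4\nmid \tilde B\), so \(\delta'=0\). Similarly for \(\epsilon\).
\item The actual values of \(\delta,\epsilon\) are forced by the degree: \(4\delta+6\epsilon\equiv p-1 \pmod{12}\) with \(\delta,\epsilon\in\{0,1\}\) fixes them according to \(p \bmod 12\).
\end{itemize}

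\textbf{Computing \(\delta'\) and \(\epsilon'\).} These I would get from \(\tilde B=\partial\tilde A\) via explicit valuations.
\begin{itemize}
\item If \(\delta=0\), write \(\tilde A = e_6^\epsilon Q\) with \(e_4\nmid Q\). Then compute \(\partial\tilde A\) modulo \(e_4\). The term \(-6e_4^2\,\partial_{e_6}\) vanishes mod \(e_4\), so \(\tilde B \equiv -4e_6\,\partial_{e_4}\tilde A \pmod{e_4}\). Determine the \(e_4\)-adic valuation of \(\partial_{e_4}\tilde A\) from the constant-in-\(e_4\) part of \(\tilde A\).
\item Combine this with the congruence \(4\delta'+6\epsilon'\equiv p+1 \pmod{12}\) and with \(\partial\tilde B=-e_4\tilde A\), which ties valuations of \(\tilde B\) to those of \(\tilde A\).
\end{itemize}
For instance, \(e_6\mid\tilde B\) implies that \(e_6\) divides \(\partial \tilde B\) up to the contribution \(-6e_4^2\,\partial_{e_6}\tilde B\), which gives a bound. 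This case analysis over \(p\equiv 1,5,7,11 \pmod{12}\) should fill the table.

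\textbf{Main obstacle.} The hard part is pinning down \(\delta'\) and \(\epsilon'\) exactly, since the degree congruence alone only determines them modulo 3 and 2 respectively. Here I would use the second equation \(\partial\tilde B=-e_4\tilde A\) to bound the exponents by \(1\) or \(2\) in the same multiplicity argument, and then let the congruence select the value.
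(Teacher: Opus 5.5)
Your proposal is correct and follows the paper's route. The ingredients are the same: the exact-multiplicity argument for $\partial$ on factors $g$ with $g\nmid\partial g$ (giving simple factors, $\delta,\epsilon\le 1$, and no shared factors), the $q$-expansion to exclude $e_4^3-e_6^2$, and bounds on $\delta',\epsilon'$ from $\partial\tilde B=-e_4\tilde A$ combined with the degree congruence. The reduction modulo $e_4$ in your third paragraph is unnecessary. Your explicit step $\delta=1\Rightarrow\delta'=0$, $\epsilon=1\Rightarrow\epsilon'=0$ is genuinely needed and sharpens the paper's write-up: the stated bounds $\delta'\le\delta+2$, $\epsilon'\le\epsilon+1$ alone would also allow, for example, $(\delta',\epsilon')=(3,1)$ when $p\equiv 5 \pmod{12}$.
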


    \begin{proof}
        First notice that \(\alpha_i, \beta_j \neq 1\) for every \(i,j \geq 1\). This happens because \(E_4^3 - E_6^2\) has no constant term whereas both \(\tilde A(E_4,E_6)\) and \(\tilde B(E_4,E_6)\) have constant term equal to 1. To reach a contradiction, assume that \(\tilde A\) is \emph{exactly divisible} by \((e_4^3 - \alpha e_6^2)^n\) for some \(\alpha \neq 1\) and \(n \geq 2\). This means that \(\tilde A\) factors as \(\tilde A = P (e_4^3 - \alpha e_6^2)^n\) and \(P\) is not divisible by \(e_4^3 - \alpha e_6^2\). Then,
        \[
        \tilde B = \partial \tilde A = \partial P (e_4^3 - \alpha e_6^2)^n + (-12n(1-\alpha)) P e_4^2 e_6 (e_4^3 - \alpha e_6^2)^{n-1} 
        \]
        where \(-12 n (1- \alpha) \neq 0 \) (notice that \(2 \leq n < \frac{p-1}{12}\) so it cannot be a multiple of \(p\)). Therefore,
        \[
            \tilde B  = Q \cdot (e_4^3 - \alpha e_6^2)^{n-1}
        \]
        where \(Q\) is not divisible by \((e_4^3 - \alpha e_6^2)\). Applying \(\partial\) again and using the same argument we get that \(
        - e_4 \tilde A = \partial \tilde B = R \cdot (e_4^3 - \alpha e_6^2)^{n-2}  \) and \(R\) is not divisible by \((e_4^3 - \alpha e_6^2)\), a contradiction.  We conclude that all the \(\alpha_i\) are different. Simple adaptations of this same argument show that
        \begin{enumerate}
            \item \(\alpha_i \neq \beta_j\) for every \(i,j \geq 1\);
            \item \(0 \leq \delta, \epsilon \leq 1\);
            \item \(0 \leq \delta' \leq \delta + 2\) and \(0 \leq \epsilon' \leq \epsilon +1\).
        \end{enumerate}
        
        Knowing that \(\delta,\epsilon \in \{0,1\}\) one can compute from 
        \[
            p-1 = 12m + 4\delta + 6 \epsilon
        \]
        the values of \(\delta,\epsilon\) for each class of \(p \mod 12\). For example, if \(p \equiv_{12} 1\) then \(4 \delta \equiv_6 0 \) and \(6 \epsilon \equiv_4 0\) so the only possibility is \(\delta = 0 = \epsilon\). Repeating the same argument for \(p \equiv_{12} 5,7,11\) fills the two first rows of \cref{tab:delta_epsilon}. For the last two rows we analyze the identities given by the degree of \(\tilde B\)
        \[
            p+1 = 12 m' + 4\delta' + 6 \epsilon'.
        \]
        and use the fact that \(0 \leq \delta' \leq \delta + 2\) and \(0 \leq \epsilon' \leq \epsilon +1\).
    \end{proof} 
    
    \begin{table}[ht!]
    \centering
        \begin{tabular}{|c| c c c c|} 
         \hline
        $p \mod 12$ & 1 & 5 & 7 & 11 \\ [0.5ex] 
         \hline\hline
        $\delta$  & 0 & 1 & 0 & 1 \\ 
        \hline
        $\epsilon$ & 0 & 0 & 1 & 1 \\
        \hline
        $\delta'$  & 2 & 0 & 2 & 0 \\
         \hline
        $\epsilon'$ & 1 & 1 & 0 & 0 \\
         \hline
        \end{tabular}
        \caption{Value of the exponents \(\delta,\delta',\epsilon, \epsilon'\). \label{tab:delta_epsilon}} 
    \end{table}

    \subsection{Computing the \texorpdfstring{$p$}{p}-th power of the Ramanujan vector field}

    \label{sec:computing_the_p-th_power}
    
    Recall that the Ramanujan vector field is the derivation
    \[
        R:= \frac{e_2^2 - e_4}{12} 
   	\frac{\partial}{\partial e_2} + 
   	\frac{e_2e_4 - e_6}{3}\frac{\partial}{\partial e_4} + 
	\frac{e_2 e_6 - e_4^2}{2} 
	\frac{\partial}{\partial e_6}    
    \]
    and that
    \begin{align*}
        F &:= - 12 \frac{\partial}{\partial e_2}, \\
        H &:= 2 e_2 \frac{\partial}{\partial e_2} + 4 e_4
			\frac{\partial}{\partial e_4} + 6 e_6 
			\frac{\partial}{\partial e_6}.
    \end{align*}
    These vector fields are defined a priori in the 3-dimensional affine space \(\mathbf{A}^3_{\mathbf{Z}[1/6]}\). As mentioned in the introduction (see \cref{eq:U}), we write 
    \[
        U := \Spec \mathbf{Z}[1/6, e_2,e_4,e_6, (e_4^3 - e_6^2)^{-1}] \subset \mathbf{A}^3_{\mathbf{Z}[1/6]}.
    \]
    Following the convention of modern algebraic geometry, given \(S\) a \(\mathbf{Z}[1/6]\)-algebra, \(U_S\) denotes \(U \otimes S = \Spec S[e_2,e_4,e_6, (e_4^3 - e_6^2)^{-1}]\). The same is true for \(\mathbf{A}^3_S\). 
        
    \begin{lemma}
        \label{lemma:lie_algebra}
        For every \(\mathbf{Z}[1/6]\)-algebra \(S\), the following are true:
        \begin{enumerate}
            \item \(\mathcal{T}_{U_S}\) is locally free of rank \(3\) and it is globally generated by \(R,F,H\);
            \item The derivations \(R,F,H\) are homogeneous of degrees \( 2,-2, 0\) respectively. 
        \item The derivations \(R,F,H\) satisfy the following commutation relations:
        \begin{align}
            \label{eq:bracketrel}
            \begin{split}
                [R,F] &= H \\
                [H,F] &= -2F \\
                [H,R] &= 2R.
            \end{split}
        \end{align}
        \end{enumerate}
    \end{lemma}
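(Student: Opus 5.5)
We prove the three items in reverse order of difficulty: first (2), then (3), then (1), since the bracket computations of (3) only use the explicit formulas and the weight bookkeeping of (2) will be needed to organise them.

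For item (2), recall that a derivation \(D\) of the graded ring \(S[e_2,e_4,e_6,(e_4^3-e_6^2)^{-1}]\) is homogeneous of degree \(d\) exactly when it sends homogeneous elements of degree \(n\) to homogeneous elements of degree \(n+d\). Because \(D\) is a derivation and the localization is by a homogeneous element, it suffices to check this on the generators \(e_2,e_4,e_6\).
\begin{itemize}
\item For \(R\), the images \(R(e_2)=(e_2^2-e_4)/12\), \(R(e_4)=(e_2e_4-e_6)/3\) and \(R(e_6)=(e_2e_6-e_4^2)/2\) have degrees \(4,6,8\), so \(R\) raises degree by \(2\).
\item For \(F\), we have \(F(e_2)=-12\), of degree \(0=2-2\), and \(F(e_4)=F(e_6)=0\), so \(F\) has degree \(-2\).
\item For \(H\), we have \(H(e_i)=i\,e_i\), so \(H\) has degree \(0\). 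In fact \(H(P)=(\deg P)\,P\) for every homogeneous \(P\) (Euler's identity), and this is what we use in (3).
\end{itemize}

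For item (3), the three identities are derivation identities, so it suffices to check that both sides agree on \(e_2,e_4,e_6\).
\begin{itemize}
\item For \([H,F]\) and \([H,R]\), we use the Euler identity. If \(D\) is homogeneous of degree \(d\) and \(P\) is homogeneous of degree \(n\), then
\[
[H,D]P=(n+d)D(P)-n\,D(P)=d\,D(P).
\]
Hence \([H,D]=dD\), which gives \([H,F]=-2F\) and \([H,R]=2R\) by (2).
\item For \([R,F]\), we compute \([R,F](e_i)=R(F(e_i))-F(R(e_i))\) directly. For \(e_2\): \(R(-12)=0\) and \(F\bigl((e_2^2-e_4)/12\bigr)=-2e_2\), so the bracket gives \(2e_2=H(e_2)\). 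For \(e_4\): \(F(e_4)=0\) and \(F\bigl((e_2e_4-e_6)/3\bigr)=-4e_4\), so the bracket gives \(4e_4\). For \(e_6\): the bracket gives \(6e_6\) in the same way. This is exactly \(H\).
\end{itemize}
Alternatively, one can use \(R=\frac{1}{12}(\partial+e_2H)\) from \cref{eq:ramanujan_vf_2}. Since \(F\) commutes with \(\partial\) and \(F(e_2)=-12\), this gives \([R,F]=-\frac{1}{12}F(e_2)H=H\) after checking \([H,F]\) terms cancel appropriately. I would present the direct computation on generators as the primary argument.

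For item (1), \(U_S\) is an open subscheme of \(\mathbf{A}^3_S\), so \(\mathcal{T}_{U_S}\) is free of rank \(3\) with basis \(\partial/\partial e_2,\partial/\partial e_4,\partial/\partial e_6\). It therefore suffices to show that the matrix \(M\) expressing \(R,F,H\) in this basis is invertible over \(\mathcal{O}(U_S)\), i.e. that \(\det M\) is a unit.
\begin{itemize}
\item The \(F\) row is \((-12,0,0)\), so expanding along it reduces \(\det M\) to \(-12\) times the \(2\times2\) minor built from the \(\partial/\partial e_4,\partial/\partial e_6\) components of \(R\) and \(H\).
\item That minor is
\[
\det\begin{pmatrix}\frac{e_2e_4-e_6}{3}&\frac{e_2e_6-e_4^2}{2}\\ 4e_4&6e_6\end{pmatrix}.
\]
The \(e_2\)-terms cancel: \(2e_2e_4e_6-2e_2e_4e_6=0\). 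What remains is \(-2e_6^2+2e_4^3=2(e_4^3-e_6^2)\).
\item Hence \(\det M=-24(e_4^3-e_6^2)\) up to sign. This is a unit because \(6\) is invertible in \(S\) and \(e_4^3-e_6^2\) is inverted on \(U_S\).
\end{itemize}

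The only real obstacle is this determinant, specifically making sure the \(e_2\)-terms cancel so that the determinant is a unit and not merely nonzero. The remaining items are routine.
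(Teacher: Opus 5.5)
Your proof is correct and follows the paper's route. Item (1) rests on the same determinant $24(e_4^3-e_6^2)$; your phrasing via invertibility of the matrix, rather than mere linear independence, is the cleaner way to get generation. Your direct checks of (2) and (3) on $e_2,e_4,e_6$ supply what the paper calls trivial.

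Drop the ``alternative'' argument for $[R,F]$. The identity $R=\frac{1}{12}(\partial+e_2H)$ does not hold as derivations of $S[e_2,e_4,e_6]$: its $\partial/\partial e_2$-coefficient is $e_2^2/6$, not $(e_2^2-e_4)/12$, and it only holds when applied to elements of $S[e_4,e_6]$. Even granting it, one gets $[R,F]=H-\frac{e_2}{6}F$, so the $[H,F]$-terms do not cancel.
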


    \begin{proof}
        The only nontrivial item is the first one. We first observe that the partial derivatives \(\partial/\partial e_i\) define an independent 
		basis for the derivations, so this module is free of rank 3. It remains to show that \(R,F\) and \(H\) are linearly independent. 
		For each \(i = 1,2,3\), let \(P_i \in S[e_2,e_4,e_6,(e_4^3 - e_6^2)^{-1}]\) 
		be such that 
		\[
			P_1(e_2,e_4,e_6)  \cdot R + P_2(e_2,e_4,e_6) \cdot  F + P_3(e_2,e_4,e_6) \cdot H = 0.
		\]
		We obtain the following system 
		\[
		\begin{pmatrix}
		    	\frac{e_2^2 - e_4}{12}  && - 12 && 2 e_2  \\
		      \frac{e_2 e_4 - e_6}{3}  && 0 &&  4 e_4  \\
		     \frac{e_2e_6 - e_4^2}{2}  && 0 && 6 e_6 
		\end{pmatrix}
		\begin{pmatrix}
		    	P_1 \\
		    	P_2 \\
		    	P_3 
		\end{pmatrix}
		= 0
		\]
		which has determinant \( 24 (e_4^3 - e_6^2) \). We conclude that the determinant of the system is invertible in 
		\(S[e_2,e_4,e_6,(e_4^3 - e_6^2)^{-1}]\), so it has no non-trivial solutions. 
    \end{proof}

     \begin{proof}[Proof of \cref{thm:main_theorem}.]
    	Let \(r_1, r_2, r_3 \in k[e_2,e_4,e_6, (e_4^3-e_6^2)^{-1}]\) be the coefficients of 
    	\(R^p\) in the basis \(R,F,H\) of \(H^0(U_k, \mathcal{T}_{U_k})\).
        This is possible by \cref{lemma:lie_algebra}. We want to show that \(r_1,r_2,r_3\) are the coefficients \(
       \tilde{A}^2, - \left(\frac{\tilde B - e_2 \tilde A}{12}\right)^2, \tilde{A} \left(\frac{\tilde B - e_2 \tilde A}{12} \right)
       \), respectively. In other words, we want to show that 
       \begin{equation}
           \label{eq:p-th_power_2}
           R^p = \tilde{A}^2 \cdot R - \left(\frac{\tilde B - e_2 \tilde A}{12}\right)^2 \cdot F + \tilde{A} \left(\frac{\tilde B - e_2 \tilde A}{12} \right) \cdot H.
       \end{equation}
       It is easy to see that this is equivalent to \cref{eq:p-th_power}.
       
       Firstly, we claim that it suffices to prove the identity \(r_1 = \tilde{A}^2\). 
        Notice that \(R^p\) commutes with \(R\), so we have an identity of the form 
        \[
            [R,R^p] = [R, r_1 R + r_2 F + r_3 H] = 0.
        \]
        This gives by the third part of \cref{lemma:lie_algebra} that the coefficients satisfy the differential system
        \begin{equation}
        \label{eq:diffequation}
            \begin{cases} 
                R(r_1) = 2r_3  \\ 
                R(r_2) = 0 \\ 
                R(r_3) = -r_2 
            \end{cases}.
        \end{equation}
        Now we observe that a solution \((r_1,r_2,r_3)\) to the system \eqref{eq:diffequation} is uniquely determined by the first entry as \(r_3 = R(r_1)/2\) and \(r_2 = - R(r_3)\).
       On the other hand, \cref{lemma:firstintegral} implies that the triple
       \[
       \left(\tilde{A}^2, - \left(\frac{\tilde B - e_2 \tilde A}{12}\right)^2, \tilde{A} \left(\frac{\tilde B - e_2 \tilde A}{12} \right)\right)
       \]
       forms a solution to \eqref{eq:diffequation}. Finally, assuming that \(r_1 = \tilde{A}^2\), we conclude that \(r_2 = - \left(\frac{\tilde B - e_2 \tilde A}{12}\right)^2\) and \(r_3 =  \tilde{A} \left(\frac{\tilde B - e_2 \tilde A}{12} \right)\).

       The rest of the proof is showing that \(r_1 = \tilde{A}^2\). Consider the morphism 
       \begin{equation}
       \label{eq:formal_solution}
           \varphi: k[e_2,e_4,e_6,(e_4^3 -e_6^2)^{-1}]  \to k((q))
       \end{equation}
       given by \(e_i \mapsto E_i\).  As discussed in \cref{subsec:Ramanujan-Serre}, \(\varphi\) defines a formal solution to the Ramanujan vector field, meaning that 
       \begin{equation}
            \label{eq:solution}
            \varphi \circ R (P) = \theta \circ \varphi(P)
       \end{equation}
       for any \(P\). We make three claims:
    	\begin{enumerate}
    		\item \(r_1\) is homogeneous of degree \(2p-2\) and does not depend on \(e_2\);
    		\item The only homogeneous element \( f \in k[e_4,e_6,(e_4^3-e_6^2)^{-1}]\) of degree \(2p-2\) such that \(\varphi(f) = 1\) is \(\tilde{A}^2\);
    		\item \(\varphi(r_1) = 1\).
    	\end{enumerate}
        These three claims imply that \( r_1 = \tilde A^2\).
  
  		For the first step, it is easy to see that \(R^p\) is homogeneous of degree \(2p\). Analyzing the degrees of \(r_1,r_2,\) and \(r_3\) in the identity \(R^p = r_1 R + r_2 F + r_3 H\) and using \cref{lemma:lie_algebra}  we conclude that 
        \(r_1, r_2,\) and \(r_3\) are homogeneous of degrees \(2p - 2\), \(2p + 2\), and \(2p\) respectively. To show that \(r_1\) does not depend on \(e_2\) consider the derivation
        \([F,R^p] \). Since \(H^0(U_k, \mathcal{T}_{U_k})\) is a restricted Lie algebra of characteristic \(p\) (see \cite{jacobson_lie_algebras}, Chapter V, Section 7), we have 
        \[
          [F,R^p] = [\ldots[[F,R], R], \ldots R]
        \]
        where, in the right hand side, we take the bracket with \(R\) a total of \(p\) times.  Using \eqref{eq:bracketrel}, we have
        \[
            [[[F,R],R],R] = [[-H,R]R] = [-2R,R] = 0
        \]
        and, as \(p \geq 5\), \([F,R^p] =0\). From this and from \(R^p = r_1 R + r_2 F+ r_3 H\) it is easy to show that \(F(r_1) = 0\).

    	For the second claim, assume that \(\varphi(f) =1\). Then \(\varphi(f-1) = 0\) and from \cref{sec:mod_forms_mod_p} this happens if, and only if,  \(f-1 \in \langle \tilde{A}-1 \rangle\). Thus there exists some \(D\) such that \(f-1 = D (\tilde{A}-1)\). Using that \(\tilde{A}\) is homogeneous of degree \(p-1\) and decomposing \(D\) in its homogeneous factors \(D = \sum_{d \geq s} D_d\) we can analyze every homogeneous component of the identity \(f-1 = D(\tilde A-1)\) to conclude
    	\[
    		\begin{cases}
            D_0 = 1 \\
    		D_{p-1} = \tilde{A} \\
    		D_{kp-k} = \tilde{A}^{k-2} (f - \tilde{A}^2), &\text{ if } k>1 \\
    		D_d = 0, &\text{ if } kp - k < d < (k+1)p - (k+1) \text{ and } d< 0.\\
    		\end{cases}	
    	\]
        But \(D\) has finite degree, so \(D_{kp-k} = 0\) for some \(k >1\), which implies that \(f - \tilde{A}^2 = 0\). 
    	
    	We finish by showing that \(r_1\) satisfies \(\varphi(r_1) = 1\). First notice that 
        \[
            \theta^p = \theta 
        \]
        as a derivation in \(k((q))\), since 
        \(
            \theta^p\left(\sum_{n \geq s} a_n q^n\right) = \sum_{n \geq s} n^p a_n q^n 
        \)
        and \(n^p \equiv_p n \) (Fermat's Little Theorem). By \cref{eq:solution} we have \(\theta^p \circ \varphi = \varphi \circ R^p \), therefore
    	\(
    		\varphi (R^p- R) = 0.
    	\)
    	  Consequently, \((R^p - R) (f) \in \ker(\varphi)\) for every \(f\). Write
    	\[
    		R^p - R = (r_1 -1)R + r_2 F+  r_3 H
    	\]
    	and let \(P := \frac{\tilde B - e_2 \tilde A}{12}\). Then \(R(P) = 0\) (by \cref{lemma:firstintegral}), \(F(P) = \tilde{A}\) and \(H(P) =  P\). We conclude that 
    	\[
    		(R^p- R)(P) = r_2 \tilde{A} + r_3P.
    	\]
    	Since \(\varphi(P) = 0\) and \(\varphi(\tilde{A}) = 1\) we get \( \varphi(r_2) = 0\). Similarly, we compute \((R^p-R)(\tilde{A}^2)\) to get 
    	\[
    		(R^p - R)(\tilde{A}^2) = 2 (r_1-1) \tilde{A} P-2 r_3 \tilde{A}^2 
    	\]
    	which has to be zero when we apply \(\varphi\), so \(\varphi(r_3) = 0\). Now, taking any \(f\) such that \((\varphi \circ R)(f) \neq 0\) (for example, take \(f = e_4^3 - e_6^2\)) we get
    	\((R^p - R)(f) = (r_1 -1) R(f) + r_2 F(f) + r_3 H(f) \). Applying \(\varphi\) we get zero, but \(\varphi(r_2) = \varphi(r_3) = 0\) already and \(\varphi \circ R(f) \neq 0\), so we conclude that \(\varphi(r_1 -1) = 0\). 
        
        Lastly, it is clear that the formula obtained is valid for the whole affine space \(\mathbf{A}^3_k\), since the coefficients \(r_i\) for \(i = 1,2,3\) have no poles in the surface \(\{e_4^3 - e_6^2 = 0\}\).
        \end{proof}
    
    \begin{cor}
    \label{cor: Sheperd-Barron-Ekedahl}
    Let \(k\) be a field of characteristic \(p \geq 5\). The \(p\)-th power of the Ramanujan vector field \(R^p \in H^0(U_k, \mathcal{T}_{U_k})\) is not a \(H^0(U_k,\mathcal{O}_{U_k})\)-multiple of \(R\). In other words, the \(p\)-curvature of the foliation generated by \(R\) in \(U_k\) is different from zero.
    \end{cor}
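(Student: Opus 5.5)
The plan is to read the statement off the explicit formula \eqref{eq:p-th_power_2} obtained in the proof of \cref{thm:main_theorem}:
\[
R^p = \tilde A^2 \cdot R - \left(\frac{\tilde B - e_2 \tilde A}{12}\right)^2 \cdot F + \tilde A \left(\frac{\tilde B - e_2 \tilde A}{12}\right) \cdot H .
\]
By \cref{lemma:lie_algebra}, the fields \(R,F,H\) form a basis of the free \(H^0(U_k,\mathcal{O}_{U_k})\)-module \(H^0(U_k,\mathcal{T}_{U_k})\), so the coefficients of \(R^p\) in this basis are unique. If \(R^p = g\cdot R\) for some regular function \(g\), uniqueness forces
\[
\left(\frac{\tilde B - e_2\tilde A}{12}\right)^2 = 0 \quad\text{and}\quad \tilde A\,\frac{\tilde B - e_2\tilde A}{12}=0
\]
in \(k[e_2,e_4,e_6,(e_4^3-e_6^2)^{-1}]\). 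This ring is a localization of a polynomial ring, hence a domain, so the first equality already gives \(\tilde B - e_2\tilde A = 0\).

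The remaining step is to show \(\tilde B - e_2\tilde A \neq 0\). Since \(\tilde A,\tilde B\in k[e_4,e_6]\) do not involve \(e_2\), it suffices to show \(\tilde A\neq 0\), because then the coefficient of \(e_2\) is \(-\tilde A\neq 0\). This holds because \(\tilde A(E_4,E_6)\) is the reduction of \(E_{p-1}\), which is congruent to \(1\) modulo \(p\) by \eqref{eq:von-staudt-kummer}; equivalently \(\varphi(\tilde A)=1\). A slightly quicker route is to note that \(\varphi(\tilde B - e_2\tilde A)=0\), yet \(\tilde B - e_2\tilde A\) is a nonzero polynomial. Alternatively, one can use the \(H\)-coefficient, which equals \(\tilde A\cdot\frac{\tilde B - e_2\tilde A}{12}\) and is nonzero as a product of nonzero elements of a domain.

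Two small points remain. First, the base field: \(\tilde A,\tilde B\) have coefficients in \(\mathbf{F}_p\subset k\), and nonvanishing of a polynomial is unaffected by extending scalars, so any field \(k\) of characteristic \(p\) is covered. Second, the interpretation of \(p\)-curvature: the foliation generated by \(R\) has zero \(p\)-curvature exactly when \(R^p\) lies in the \(\mathcal{O}\)-span of \(R\). This is precisely the statement just refuted, so the second sentence of the corollary follows.

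There is no genuine obstacle; the only thing needing care is justifying uniqueness of the coefficients, which is exactly the invertibility of the determinant \(24(e_4^3-e_6^2)\) on \(U_k\) from \cref{lemma:lie_algebra}.
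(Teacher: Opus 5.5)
Your proposal is correct and is essentially the paper's proof. Uniqueness of coefficients in the basis $R,F,H$ forces $\tilde B = e_2\tilde A$ whenever $R^p \in H^0(U_k,\mathcal{O}_{U_k})\cdot R$, and this is impossible because $\tilde A,\tilde B$ do not involve $e_2$ and $\tilde A\neq 0$ (its $q$-expansion is $E_{p-1}\equiv 1 \bmod p$). You make explicit what the paper leaves implicit: uniqueness via the determinant $24(e_4^3-e_6^2)$, the domain property, and why $\tilde A\neq 0$. Your aside that $\varphi(\tilde B-e_2\tilde A)=0$ gives a ``quicker route'' is not an argument by itself, but nothing in your proof depends on it.
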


    \begin{proof}
        By \cref{eq:p-th_power_2}, \(R^p\) is a multiple of \(R\) only when \(\tilde B = e_2 \tilde A\). Since \(\tilde B\) is independent of \(e_2\), this is only possible when both polynomials are identically zero, but this is false for every prime \(p \geq 5\).
    \end{proof}

\subsubsection{Singular set of \texorpdfstring{$R^p$}{Rp}}
\label{sec:singular_set}
    
    We describe the singular set of \(R^p\), i.e., the set where \(R^p\) vanishes. 

    \begin{cor}
        Let \(k\) be an algebraically closed field of characteristic \(p \geq 5\). Let \(\Sing(R^p)\) be the singular set of \(R^p\) in \(\mathbf{A}^3_k\), i.e., the algebraic set defined by the ideal \(\langle R^p(e_i): i = 2,4,6\rangle \). Then 
        \[
            \Sing(R^p) = \{e_4^3 - e_6^2 = 0\}.
        \]
    \end{cor}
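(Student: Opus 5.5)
The plan is to use the explicit formula \eqref{eq:p-th_power} of \cref{thm:main_theorem}. Dropping tildes, write \(A=\tilde A(e_4,e_6)\) and \(B=\tilde B(e_4,e_6)\). The three components of \(R^p\) are
\[
R^p(e_2)=\tfrac{B^2-e_4A^2}{12},\quad R^p(e_4)=\tfrac{A(e_4B-e_6A)}{3},\quad R^p(e_6)=\tfrac{A(e_6B-e_4^2A)}{2}.
\]
These do not involve \(e_2\), so \(\Sing(R^p)\) is the cylinder over the common zero set of these three polynomials in the \((e_4,e_6)\)-plane. It therefore suffices to show that this common zero set is exactly the cuspidal cubic \(e_4^3=e_6^2\). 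I would prove the two inclusions separately.

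For \(\supseteq\), I would parametrize the cubic over the algebraically closed field \(k\) by \((e_4,e_6)=(t^2,t^3)\). By homogeneity of \(A\) and \(B\) (of degrees \(p-1\) and \(p+1\)),
\[
A(t^2,t^3)=t^{p-1}A(1,1),\qquad B(t^2,t^3)=t^{p+1}B(1,1).
\]
Substituting, each component becomes a power of \(t\) times a factor that vanishes when \(A(1,1)=B(1,1)\):
\[
R^p(e_2)=t^{2p+2}\,\tfrac{B(1,1)^2-A(1,1)^2}{12},\quad R^p(e_4)=t^{2p+2}A(1,1)\,\tfrac{B(1,1)-A(1,1)}{3},\quad R^p(e_6)=t^{2p+3}A(1,1)\,\tfrac{B(1,1)-A(1,1)}{2}.
\]
So the key input is \(A(1,1)=B(1,1)=1\) in \(\mathbf{F}_p\). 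This holds because \(E_4\) and \(E_6\) have constant term \(1\), so \(A(1,1)\) and \(B(1,1)\) are the constant terms of \(E_{p-1}\) and \(E_{p+1}\), which are \(1\) (consistent with the congruences \eqref{eq:von-staudt-kummer}).

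For \(\subseteq\), suppose all three components vanish at a point \((e_4,e_6)\) with \(e_4^3\neq e_6^2\). There are two cases.
\begin{enumerate}
\item If \(A=0\) there, the first component forces \(B=0\). Since \(A\) and \(B\) are coprime homogeneous polynomials in two variables (\cref{lemma:factorization}), their only common zero is the origin, which lies on the cubic. This is a contradiction.
\item If \(A\neq0\), put \(\lambda=B/A\). The last two components give \(e_6=\lambda e_4\) and \(e_4^2=\lambda e_6\), and the first gives \(\lambda^2=e_4\). Hence \(e_4=\lambda^2\) and \(e_6=\lambda^3\), so \(e_4^3=e_6^2\), again a contradiction.
\end{enumerate}

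I do not expect a real obstacle here. The only point needing care is justifying \(A(1,1)=B(1,1)=1\) via constant terms of \(q\)-expansions. I also need to note that the statement is set-theoretic, so reducedness of the ideal \(\langle R^p(e_i)\rangle\) is not needed.
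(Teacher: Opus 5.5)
Your proof is correct. The inclusion \(\subseteq\) is exactly the paper's argument: split on whether \(\tilde A(x)=0\), use coprimality of \(\tilde A,\tilde B\) in the first case, and solve \(e_6=\lambda e_4\), \(e_4=\lambda^2\) in the second.

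For \(\supseteq\) you take a genuinely different route. The paper goes back to characteristic zero. It notes that \(E_{p+1}^2-E_4E_{p-1}^2\), \(E_{p-1}(E_4E_{p+1}-E_6E_{p-1})\) and \(E_{p-1}(E_6E_{p+1}-E_4^2E_{p-1})\) have zero constant term, so they are cusp forms of weights \(2p+2,2p+4,2p+6\). They are therefore divisible by \(\Delta\), and reducing mod \(p\) gives \(\langle R^p(e_i)\rangle\subset\langle e_4^3-e_6^2\rangle\). You instead stay in characteristic \(p\) and use only weighted homogeneity together with \(\tilde A(1,1)=\tilde B(1,1)=1\).

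The two arguments are the same fact in different clothing. Evaluating at \((1,1)\) is taking the constant term of the \(q\)-expansion. For a weighted homogeneous polynomial, vanishing at \((1,1)\) is equivalent to vanishing on the whole cusp \(\{(t^2,t^3)\}\), hence (as \(e_4^3-e_6^2\) is irreducible) to divisibility by \(e_4^3-e_6^2\). That is the algebraic content of ``cusp form \(\Rightarrow\) multiple of \(\Delta\)''. Your version is more self-contained: it needs neither the structure of cusp forms nor the transfer from \(\mathbf{Z}_{(p)}\) to \(\mathbf{F}_p\). The paper's version gives the divisibility directly, which you would only recover via the Nullstellensatz, though it is not needed for the set-theoretic claim.

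There is one computational slip to fix. With \(\deg e_4=4\) and \(\deg e_6=6\), a weighted homogeneous \(P\) of degree \(d\) satisfies \(P(t^2,t^3)=t^{d/2}P(1,1)\). So \(\tilde A(t^2,t^3)=t^{(p-1)/2}\tilde A(1,1)\) and \(\tilde B(t^2,t^3)=t^{(p+1)/2}\tilde B(1,1)\), not \(t^{p-1}\) and \(t^{p+1}\). On the cusp the three components are then \(t^{p+1}\), \(t^{p+2}\), \(t^{p+3}\) times your factors. With the exponents you wrote, the two terms of \(R^p(e_2)\) would carry \(t^{2p+2}\) and \(t^{2p}\), so your display does not follow from your own formula. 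After the correction the conclusion is unchanged.
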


    \begin{proof}
            Using \cref{thm:main_theorem} it is easy to show that 
        \begin{align*}
            R^p(e_2) &= \frac{1}{12}\left( \tilde B^2 - e_4 \tilde A^2 \right), \\
            R^p(e_4) &= \frac{\tilde A}{3} \left( e_4 \tilde B  - e_6 \tilde A \right), \\
            R^p(e_6) &= \frac{\tilde A}{2} \left( e_6 \tilde B - e_4^2 \tilde A \right).
        \end{align*}
        For a moment, forget that we are taking modulo \(p\) of the coefficients and consider the expressions in the right hand side taking \(A\) and \(B\) as the original polynomials with coefficients in \(\mathbf{Z}_{(p)}\). Substituting \(e_i\) by \(E_i\) we see that the expressions represent cusp forms of weights \(2p+2, 2p+4\) and \(2p+6\) respectively. Since \(p \geq 5\), they are necessarily divisible by \(\Delta\). This shows that \(\langle R^p(e_i): i = 2,4,6 \rangle \subset \langle e_4^3 - e_6^2 \rangle\), therefore, \(\Sing(R^p) \supset \{e_4^3 - e_6^2 = 0\}\). For the other inclusion, pick any point \(x = (x_1,x_2,x_3) \in \Sing(R^p)\). Then either \(\tilde A(x) = 0\) or \(\tilde A(x) \neq 0\). In the first case, by the first equation we have \(\tilde B(x) = 0\), so 
        \[
            x \in \{\tilde A = 0\} \cap \{\tilde B = 0\} = \{e_4 = e_6 = 0\} \subset \{e_4^3 - e_6^2 =0\}.
        \]
        In the case \(\tilde A(x) \neq 0\), by the first equation 
        \[
            x_2  = \frac{\tilde B^2(x)}{ \tilde A^2(x)}
        \]
        and by the second one 
        \[
            x_3 = \frac{\tilde B^3(x)}{\tilde A^3(x)}
        \]
        so \(x_2^3 - x_3^2 = 0\).
    \end{proof}

%% file: supersingular_locus.tex
\section{Supersingular locus}
    We use one of the descriptions of the supersingular polynomial in terms of the polynomial \(\tilde A\) to compute equations for the locus of supersingular elliptic curves in \(U_k\) and we prove \cref{thm:supersingular_locus}.
    
    \subsection{Parametrizing the moduli space}
    \label{subsec:moduli_space}
    It is known (see \cite{movasati-2012}, \cite{tiago_higher_ramanujan_equations}) that \(U\) (\cref{eq:U}) represents the moduli functor 
    \begin{equation}
        \mathcal{B}: \label{eq:moduli_functor}
	   \text{Sch}/\mathbf{Z}[1/6] \longrightarrow \text{Sets}
    \end{equation}
    taking the \(\mathbf{Z}[1/6]\)-scheme \(S\) to the set of equivalence classes of triples \((E/S, \omega, \alpha)\), where \(E/S\) is a family of elliptic curves, \(\omega \in F^1 H^1_{DR}(E/S)\) and \(\alpha \in H^1_{DR}(E/S) \) form a basis of \(H^1_{DR}(E/S) \) satisfying \(\langle \omega, \alpha \rangle =1\). Moreover, the moduli problem has a universal family \(\mathcal{E}/U\) where \(\mathcal{E}\) has an affine equation
    \begin{equation}
        \label{eq:univ_family}
        \mathcal{E}: y^2 = 4 \left( x + \frac{e_2}{12}\right)^3 - \frac{e_4}{12} \left(x + \frac{e_2}{12}\right) + \frac{e_6}{216}.
    \end{equation}
    
    Therefore, given an algebraically closed field \(k\) of characteristic \(p \geq 5\), the closed points of \(U_k\) are in bijection with the isomorphism classes of triples \((E/k, \omega, \alpha)\). Since we have the universal family \(\mathcal{E}_k/U_k\) as in \cref{eq:univ_family}, the bijection is given by 
    \begin{align}
        \label{eq:bijection_moduli}
        \begin{split}
        U_k(k) = \mathbf{A}^3_k(k) \setminus \{e_4^3 - e_6^2 = 0\} &\to \mathcal{B}(k)  \\
        (a,b,c) &\mapsto \text{class of } (E_{a,b,c}, dx/y, x dx/y)
        \end{split}
    \end{align}
    where the elliptic curve \(E_{a,b,c}/k\) has affine coordinates \(E_{a,b,c}: y^2 = 4 \left( x + \frac{a}{12} \right)^3 - \frac{b}{12} \left( x + \frac{a}{12} \right) + \frac{c}{216}\). 

    Consider the map given by the \(j\)-invariant \(\mathcal{B}(k) \to \mathbf{A}^1_k(k)\). Then we can lift it to \(U_k(k)\) to get a map given (in \(k\)-points) by
    \[
	   (a,b,c) \mapsto j(E_{a,b,c}) = 1728 \frac{b^3}{b^3 - c^2}.
    \]
    This map is induced by the morphism of schemes 
    \begin{equation} 
	   \label{J-map}
	   J: U_k \to \mathbf{A}^1_k 
    \end{equation}
    associated to the ring homomorphism
    \begin{align*}
	   k[t] &\longrightarrow k[e_2,e_4,e_6,(e_4^3 - e_6^2)^{-1}] \\
	   t &\mapsto 1728 \frac{e_4^3}{e_4^3 - e_6^2}.
    \end{align*}

    \subsection{Computing the supersingular locus}    
    The \(p\)-th supersingular polynomial is defined as the single variable polynomial given by 
    \[
	   ss_p(t) := \prod_{E/k \text{ supersingular}} (t - j(E))
    \]
    where the product runs through isomorphism classes of supersingular elliptic curves over \(k\).
    It is well defined, since there are only finitely many supersingular \(j\)-values over \(k\). We want to describe the locus of supersingular elliptic curves of \( U_k\), that is, the preimage of the zeroes of \(ss_p(t)\) by the morphism \(J: U_k \to \mathbf{A}^1_k\). 

    Let \(E_{p-1}\) be the \(p-1\)-th Eisenstein series and write 
    \begin{equation}
    \label{eq:E_{p-1}}
       E_{p-1} = (E_4^3 - E_6^2)^m E_4^{\delta} E_6^{\epsilon} f(j), 
    \end{equation}
    where \(p-1 = 12m + 4\delta + 6\epsilon\) as in \cref{eq:factorization} and \(j = j(\tau) = 1728 E_4^3/(E_4^3 - E_6^2)\). Here, \(f(j)\) is a polynomial in \(\mathbf{Q}[j]\) with \(p\)-integral coefficients and by \cite{kaneko-zagier-supersingular_j-invariants} we have
    \begin{equation}
	   \label{supersingular_polynomial}
	   ss_p(t)  \equiv_p \pm t^{\delta} (t - 1728)^{\epsilon} f(t).
    \end{equation}

    The equation \cref{eq:E_{p-1}} defines the identity in \(\mathbf{Q}(e_4,e_6)\)
    \[
    	A(e_4,e_6) = (e_4^3 - e_6^2)^m e_4^{\delta} e_6^{\epsilon} f\left( 1728 \frac{ e_4^3}{e_4^3 - e_6^2} \right).
    \]
    Multiplying both sides by \(\left( 1728 \frac{ e_4^3}{e_4^3 - e_6^2} \right)^{\delta}\left( 1728 \frac{ e_4^3}{e_4^3 - e_6^2} -1728\right)^{\epsilon}\) and reducing modulo \(p\) we get from \cref{supersingular_polynomial}
    \begin{equation}
	   \label{supersingular_polynomial_2}
	   ss_p\left( 1728 \frac{ e_4^3}{e_4^3 - e_6^2} \right) \equiv_p c \frac{e_4^{2 \delta} e_6^{\epsilon} A(e_4,e_6)}{(e_4^3 - e_6^2)^{m+\delta+\epsilon}} 
    \end{equation}
    where \(c\) is a nonzero constant in \(k\).

    \begin{prop}
        \label{prop:unreduced_scheme}
	   Let \(p \geq 5\) be a prime number and 
	   \[
		J: U_k \to \mathbf{A}^1_k	
	   \]
	   the morphism induced by the \(j\)-invariant. 
	   The scheme \(J^{-1}(V(ss_p(t))) \hookrightarrow U_k\) can be written as the closed subscheme defined as the 
	   zero set of \(f_p(e_2,e_4,e_6) \in k[e_2,e_4,e_6] \) where 
	   \[ 
	   f_p(t_1,t_2,t_3) =
	   \begin{cases} 
		\tilde{A}(e_4,e_6), & \text{ if } p \equiv_{12} 1 \\
		e_4^2 \tilde{A}(e_4,e_6), & \text{ if } p \equiv_{12} 5\\
		e_6 \tilde{A}(e_4,e_6), & \text{ if } p \equiv_{12} 7 \\
		e_4^2 e_6 \tilde{A}(e_4,e_6), & \text{ if } p \equiv_{12} 11
	   \end{cases}.
	   \]
    \end{prop}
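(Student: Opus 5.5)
The plan is to compute the ideal generated by \(J^*(ss_p(t))\) in the ring \(\mathcal{O}(U_k) = k[e_2,e_4,e_6,(e_4^3-e_6^2)^{-1}]\) directly from \eqref{supersingular_polynomial_2}. By definition, the scheme \(J^{-1}(V(ss_p))\) is the closed subscheme of \(U_k\) cut out by the single function
\[
ss_p\!\left(1728\frac{e_4^3}{e_4^3-e_6^2}\right) \in \mathcal{O}(U_k).
\]
By \eqref{supersingular_polynomial_2} this function equals
\[
c\,\frac{e_4^{2\delta}e_6^{\epsilon}\tilde A(e_4,e_6)}{(e_4^3-e_6^2)^{m+\delta+\epsilon}},
\]
with \(c\in k^*\). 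The constant \(c\) and the power of \(e_4^3-e_6^2\) are units in \(\mathcal{O}(U_k)\). So the ideal is generated by \(e_4^{2\delta}e_6^{\epsilon}\tilde A\), and the scheme is \(V(e_4^{2\delta}e_6^{\epsilon}\tilde A)\subset U_k\).

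It remains to read off \(\delta,\epsilon\) from \cref{tab:delta_epsilon} in each residue class of \(p\) modulo \(12\):
\begin{itemize}
\item \(p\equiv_{12}1\): \(\delta=\epsilon=0\), giving \(\tilde A\);
\item \(p\equiv_{12}5\): \(\delta=1,\epsilon=0\), giving \(e_4^2\tilde A\);
\item \(p\equiv_{12}7\): \(\delta=0,\epsilon=1\), giving \(e_6\tilde A\);
\item \(p\equiv_{12}11\): \(\delta=\epsilon=1\), giving \(e_4^2e_6\tilde A\).
\end{itemize}
This is exactly \(f_p\).

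The step that needs care is the derivation of \eqref{supersingular_polynomial_2} itself. Write \(j=1728e_4^3/(e_4^3-e_6^2)\), so that \(j-1728=1728e_6^2/(e_4^3-e_6^2)\). From the factorization of \(A\) and the Kaneko--Zagier congruence \eqref{supersingular_polynomial},
\[
ss_p(j)=\pm\, j^{\delta}(j-1728)^{\epsilon}f(j)=\pm\, j^{\delta}(j-1728)^{\epsilon}\,\frac{A}{(e_4^3-e_6^2)^m e_4^{\delta}e_6^{\epsilon}}.
\]
Substituting \(j\) and \(j-1728\), the factor \(j^\delta\) contributes \(1728^\delta e_4^{3\delta}/(e_4^3-e_6^2)^\delta\), and \((j-1728)^\epsilon\) contributes \(1728^\epsilon e_6^{2\epsilon}/(e_4^3-e_6^2)^\epsilon\). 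Cancelling against \(e_4^{\delta}e_6^{\epsilon}\) in the denominator leaves \(e_4^{2\delta}e_6^{\epsilon}A\) over \((e_4^3-e_6^2)^{m+\delta+\epsilon}\). All the constants involved are powers of \(1728\) and \(\pm1\), hence units mod \(p\geq5\).

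One more check is needed: the reductions modulo \(p\) must be compatible. \(f\) has \(p\)-integral coefficients, so \eqref{supersingular_polynomial} is an identity in \(\mathbf{F}_p[t]\), and substituting \(t=j\) is a ring map into \(\mathcal{O}(U_k)\). This justifies passing the congruence through. The main potential obstacle is confirming that \(\deg ss_p\) equals \(\deg(t^\delta(t-1728)^\epsilon f)\), i.e. the normalization in the Kaneko--Zagier formula. I would take this as given from the cited result, since only the equality up to a unit is used.
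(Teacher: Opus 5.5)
Your proposal is correct and follows the same route as the paper. The closed subscheme $J^{-1}(V(ss_p))$ is cut out by the single function $ss_p\bigl(1728e_4^3/(e_4^3-e_6^2)\bigr)$. By \eqref{supersingular_polynomial_2} this function equals $e_4^{2\delta}e_6^{\epsilon}\tilde A$ times a unit of $\mathcal{O}(U_k)$, and the four cases come from reading $\delta,\epsilon$ off \cref{tab:delta_epsilon}. Your re-derivation of \eqref{supersingular_polynomial_2} (via $j-1728=1728e_6^2/(e_4^3-e_6^2)$ and the $p$-integrality of $f$) simply fills in a computation the paper states without detail.
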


    \begin{proof}
	   The scheme \( J^{-1}(V(ss_p)) \) is given by
	   \[
	       V(ss_p) \times_{\mathbf{A}^1_k} U_k  \hookrightarrow U_k
	   \]
	   where the \(\mathbf{A}^1_k\)-structure of \(U_k\) is given by \eqref{J-map}.
	   After convenient identifications this is the morphism of schemes induced by the \(k[t]\)-homomorphism
	   \begin{align*}
		  k[e_2,e_4,e_6,(e_4^3 - e_6^2)^{-1}] \twoheadrightarrow \frac{k[e_2,e_4,e_6,(e_4^3 - e_6^2)^{-1}]}{(ss_p(1728 e_4^3/(e_4^3 - e_6^2)))}.
	   \end{align*}
	   Since \(p\) is prime and different from \(2\) and \(3\) then \(p\) is either \(1,5,7\) or \(11\) modulo 12. For each of these cases we find the value of \(\delta\) and \(\epsilon\) as in \cref{tab:delta_epsilon}. Finally, using 
	   \eqref{supersingular_polynomial_2} and getting rid of unities we get the expressions for \(f_p(e_2,e_4,e_6)\).
    \end{proof} 

    \begin{proof}[Proof of \cref{thm:supersingular_locus}]    

    We start by showing that \(J^{-1}(V(ss_p))_{\text{red}} = V(\tilde A)\) for every prime \(p \geq 5\). Indeed, by \cref{eq:factorization}
    \begin{equation*}
    	\tilde A(e_4,e_6) = \alpha_0 \prod_{i = 1}^m (e_4^3 - \alpha_i e_6^2) e_4^{\delta} e_6^{\epsilon}
    \end{equation*}
    with \(\delta,\epsilon\) as in \cref{tab:delta_epsilon}. Additionally, the numbers \(\alpha_1,\ldots, \alpha_m \in k\) are all different and non-zero. Analyzing for each \(l \in \{5,7,11\}\), if \(p \equiv_{12} l\) there are extra powers of \(e_4\) and \(e_6\) in \(f_p(e_4,e_6)\). Getting rid of the extra powers we end up with \(\tilde A(e_4,e_6)\) which defines a reduced scheme. Moreover, by \eqref{eq:factorization} it is easy to see that each component of \(SS_p\) is smooth.

    Now we show that the Ramanujan vector field is transversal to the supersingular locus \(SS_p\) at every point. Recall from \cref{lemma:firstintegral} that
	\[
		R(\tilde A) = d\tilde A (R) = \frac{1}{12}(\tilde B - e_2 \tilde A).
	\]
	Using the fact that \(\tilde B\) is coprime to \(\tilde A\), the hypersurface \(\{\tilde B = 0\}\) has no common components with \(SS_p\). Thus, if \(x\) is a 
	closed point of \(SS_p\) then \(\tilde B(x) \neq 0\). We conclude that \(R_x \notin \ker (dA)_x = T_x SS_p\). Since each component of \(SS_p\) is smooth, \(T_xSS_p\) is a dimension 2 subspace of \(T_x\mathbf{A}^3_k\), therefore \(k R_x + T_xSS_p = T_x \mathbf{A}^3_k\).

    Finally, we show that the supersingular locus is the reduced scheme associated to the zeroes of \(R^p + \left(\frac{\widetilde B}{12}\right)^2 F \in \mathcal{T}_{U_k}\). Computing \(R^p + \left(\frac{\widetilde B}{12}\right)^2 F\) in the basis \(\partial/\partial e_i\), \(i = 2,4,6\) we get
    \begin{align*}
            R^p(e_2) + \left(\frac{\widetilde B}{12}\right)^2 F(e_2) &= - \frac{1}{12}  e_4 \widetilde A^2,   \\
            R^p(e_4) + \left(\frac{\widetilde B}{12}\right)^2 F(e_4) &= \frac{\tilde A}{3} \left( e_4 \tilde B  - e_6 \tilde A \right), \\
            R^p(e_6) + \left(\frac{\widetilde B}{12}\right)^2 F(e_6)&= \frac{\tilde A}{2} \left( e_6 \tilde B - e_4^2 \tilde A \right). 
        \end{align*}
        By the equations it is clear that the ideal of \(\left\{R^p + \left(\frac{\widetilde B}{12}\right)^2 F = 0\right\}\) is contained in \(\langle \widetilde A\rangle\). On the other hand, multiplying the second equation by \(6 e_6\) and the third by \(-4e_4\) and summing them up we get \(2\widetilde{A}^2(e_4^3 - e_6^2) \), which means that the radical of the ideal of \( \left\{R^p + \left(\frac{\widetilde B}{12}\right)^2 F = 0\right\} \) is equal to \(\langle \widetilde A \rangle\).
        \end{proof}